    \newcommand{\Rmnum}[1]
    {\expandafter\@slowromancap\romannumeral #1@}
\def\wz{\tilde}
\newtheorem{thm}{Theorem}[section]
\newtheorem{lemma}[thm]{Lemma}
\newcounter{foo}[subsection]
\newtheorem{step}[foo]{Step}
\newtheorem{example}[thm]{Example}
\newtheorem{defin}[thm]{Definition}
\theoremstyle{definition}
\newtheorem{remark}[thm]{Remark}
\title{Primitive weakly distance-regular circulant digraphs}
\author{Akihiro Munemasa}
\author{Kaishun Wang}
\author{Yuefeng Yang}
\address{Research Center for Pure and Applied Mathematics\\
Graduate School of Information Sciences\\
Tohoku University, Sendai, 980--8579, Japan}
\email{munemasa@math.is.tohoku.ac.jp}
\address{Sch. Math. Sci. \& Lab. Math. Com. Sys.\\
 Beijing Normal University \\
 Beijing, 100875, China }
\email{wangks@bnu.edu.cn}
\address{School of Science \\
China University of Geosciences \\
Beijing, 100083, China}
\email{yangyf@cugb.edu.cn}
\begin{document}
\keywords{association scheme; pseudocyclic; Cayley digraph; weakly distance-regular digraph; primitivity}
\subjclass[2010]{05E30}
\date{}
\begin{abstract}
We classify certain non-symmetric commutative association schemes. As an application, we determine all the primitive weakly distance-regular circulant digraphs.
\end{abstract}

\maketitle

\section{Introduction}

A \emph{$d$-class association scheme} $\mathfrak{X}$ is a pair $(X,\{R_{i}\}_{i=0}^{d})$, where $X$ is a finite set, and each $R_{i}$ is a
nonempty subset of $X\times X$ satisfying the following axioms (see \cite{EB84,PHZ96,PHZ05} for a background of the theory of association schemes):
\begin{itemize}
\item [{\rm(i)}] $R_{0}=\{(x,x)\mid x\in X\}$ is the diagonal relation;

\item [{\rm(ii)}] $X\times X=R_{0}\cup R_{1}\cup\cdots\cup R_{d}$, $R_{i}\cap R_{j}=\emptyset~(i\neq j)$;

\item [{\rm(iii)}] for each $i$, $R_{i}^{T}=R_{i^{*}}$ for some $0\leq i^{*}\leq d$, where $R_{i}^{T}=\{(y,x)\mid(x,y)\in R_{i}\}$;

\item [{\rm(iv)}] for all $i,j,l$, the cardinality of the set $$P_{i,j}(x,y):=R_{i}(x)\cap R_{j^{*}}(y)$$ is constant whenever $(x,y)\in R_{l}$, where $R(x)=\{y\mid (x,y)\in R\}$ for $R\subseteq X\times X$ and $x\in X$. This constant is denoted by $p_{i,j}^{l}$.
\end{itemize}
A $d$-class association scheme is also called an association scheme with $d$ classes (or even simply a scheme). The integers $p_{i,j}^{l}$ are called the \emph{intersection numbers} of $\mathfrak{X}$. We say that $\mathfrak{X}$ is \emph{commutative} if $p_{i,j}^{l}=p_{j,i}^{l}$ for all $i,j,l$. The subsets $R_{i}$ are called the \emph{relations} of $\mathfrak{X}$. For each $i$, the integer $k_{i}$ $(=p_{i,i^{*}}^{0})$ is called the \emph{valency} of $R_{i}$.   A relation $R_{i}$ is called \emph{symmetric} if $i=i^{*}$, and \emph{non-symmetric} otherwise. An association scheme is called \emph{symmetric} if all relations are symmetric, and \emph{non-symmetric} otherwise. An association
scheme is called \emph{skew-symmetric} if the diagonal relation is the
only symmetric relation.

Let $\mathfrak{X}=(X,\{R_{i}\}_{i=0}^{d})$ be an association scheme with $|X|=n$. For two nonempty subsets $E$ and $F$ of $\{R_{i}\}_{i=0}^{d}$, define
\[EF:=\{R_{l}\mid\sum_{R_{i}\in E}\sum_{R_{j}\in F}p_{i,j}^{l}\neq0\}.\]
We write $R_{i}R_{j}$ instead of $\{R_{i}\}\{R_{j}\}$,
and $R_i^2$ instead of $\{R_{i}\}\{R_{i}\}$.
If $R_{i^{*}}R_{j}\subseteq F$ for any $R_{i},R_{j}\in F$, we say that $F$ is {\em closed}. We say that $R_{j}$ {\em generates} $\mathfrak{X}$ if the smallest
closed subset containing $R_{j}$ is equal to $\{R_{i}\}_{i=0}^{d}$. We call $\mathfrak{X}$ \emph{primitive} if every non-diagonal relation generates $\mathfrak{X}$. Otherwise, $\mathfrak{X}$ is said to be \emph{imprimitive}.

In this paper, we classify certain non-symmetric commutative association schemes. Our first main result is as follows. See Section 2 for precise definitions of a pseudocyclic scheme and cyclotomic scheme.

\begin{thm}\label{main1}
Let $\mathfrak{X}=(X,\{R_{i}\}_{i=0}^{d})$ be a commutative  association scheme generated by a non-symmetric relation $R_{1}$ satisfying
\begin{align}
R_{1}^{2}&\subseteq\{R_{1},R_{1^{*}},R_{2}\},\label{1.1a}\\
R_{1}R_{1^{*}}&\subseteq\{R_{0},R_{1},R_{1^{*}},R_{2},R_{2^{*}}\},\label{1.1b}\\
2&\notin\{1^{*},2^{*}\} .\label{1.1c}
\end{align}
If $k_{1}=k_{2}>1$, then $d=4$. Moreover, if $\mathfrak{X}$ is pseudocyclic, then $\mathfrak{X}$ is isomorphic to the cyclotomic scheme ${\rm Cyc}(13,4)$.
\end{thm}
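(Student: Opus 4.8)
The plan is to first prove $d=4$ by showing that $F=\{R_0,R_1,R_{1^*},R_2,R_{2^*}\}$ is a closed subset, and then to treat the pseudocyclic case by an eigenvalue computation. For the bookkeeping: since $R_1$ is non-symmetric $1\neq1^*$, while \eqref{1.1c} gives $2\neq2^*$ and $2\neq1^*$; these force $2^*\notin\{0,1,1^*,2\}$, so $R_0,R_1,R_{1^*},R_2,R_{2^*}$ are five distinct relations and $d\geq4$. Put $k:=k_1=k_2=k_{1^*}=k_{2^*}$, $a:=p_{1,1}^1$, $b:=p_{1,1}^{1^*}$, $c:=p_{1,1}^2$. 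From $\sum_l p_{1,1}^lk_l=k^2$ and \eqref{1.1a} we get $a+b+c=k$ with $c>0$; since $A_1A_{1^*}$ is symmetric, \eqref{1.1b} yields $p_{1,1^*}^1=p_{1,1^*}^{1^*}=:e$ and $p_{1,1^*}^2=p_{1,1^*}^{2^*}=:g$, and $\sum_l p_{1,1^*}^lk_l=k^2$ gives $2(e+g)=k-1$, so $k$ is odd.

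Since $R_1$ generates $\mathfrak X$, any closed subset containing $R_1$ equals $\{R_i\}_{i=0}^d$; hence it suffices to show $F$ is closed, which gives $d=4$. The products $R_1^2$, $R_1R_{1^*}$, $R_{1^*}^2$ lie in $F$ by \eqref{1.1a}, \eqref{1.1b} and transposition. As $c>0$ we may write $A_2=c^{-1}(A_1^2-aA_1-bA_{1^*})$, so every product of the basis matrices reduces to a word in $A_1,A_{1^*}$; because $A_1A_{2^*}=(A_{1^*}A_2)^{T}$ and $A_{1^*}A_{2^*}=(A_1A_2)^{T}$, it is enough to show $A_1A_2$ and $A_{1^*}A_2$ lie in the span of $A_0,A_1,A_{1^*},A_2,A_{2^*}$, the remaining products following by transposition. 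Using $p_{i,j}^lk_l=p_{i^*,l}^jk_j$ with all valencies equal to $k$, I compute the ``outer'' coefficients directly: $p_{1,2}^1=g$, $p_{1,2}^{1^*}=0$, $p_{1^*,2}^1=c$, $p_{1^*,2}^{1^*}=g$, while $p_{1,2}^0=p_{1^*,2}^0=0$ since $2\neq1^*$.

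The difficulty is that the coefficients $p_{1,2}^2,p_{1,2}^{2^*},p_{1^*,2}^2,p_{1^*,2}^{2^*}$ and those of any $R_l\notin F$ are \emph{not} pinned down by \eqref{1.1a}--\eqref{1.1b} alone; expanding $A_1A_2$ in powers of $A_1$ merely reproduces the unknown. To kill the possible new relations I would expand the associative product $A_{1^*}A_1^2$ in two ways, as $(A_1A_{1^*})A_1$ and as $A_{1^*}(aA_1+bA_{1^*}+cA_2)$; substituting the known expansions of $A_1^2$ and $A_1A_{1^*}$ collapses this to one identity $gA_1A_2+gA_1A_{2^*}-cA_{1^*}A_2\in\langle A_0,A_1,A_{1^*},A_2,A_{2^*}\rangle$ with explicit coefficients. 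Feeding this into the valency equations $\sum_l p_{1,2}^lk_l=\sum_l p_{1^*,2}^lk_l=k^2$, together with the non-negativity and integrality of all intersection numbers, produces a linear system that---\emph{precisely because the four valencies are equal}---is rigid enough to force the total valency carried by relations outside $F$ to be zero. Thus $A_1A_2,A_{1^*}A_2$ lie in the required span, $F$ is closed, and $d=4$. I expect this positivity-and-integrality elimination to be the main obstacle.

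In the pseudocyclic case $\mathfrak X$ is a skew-symmetric $4$-class scheme with conjugate pairs $\{R_1,R_{1^*}\},\{R_2,R_{2^*}\}$, all valencies $k$, and (since the number of real primitive idempotents equals the number of symmetric relations, here only $R_0$) the four non-principal idempotents form two conjugate pairs; pseudocyclicity makes all four multiplicities equal to $k$ and $n=1+4k$. Writing $s_1,s_3$ for the two values of $\lambda+\bar\lambda$ over these conjugate pairs, the trace identities $\mathrm{tr}(A_1)=0$, $\mathrm{tr}(A_1^2)=0$, $\mathrm{tr}(A_1A_1^{T})=nk$ give $s_1+s_3=-1$ and $s_1^2+s_3^2=1+2k$, hence $s_1s_3=-k$ and $s_{1,3}=\tfrac12(-1\pm\sqrt{1+4k})$. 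Imposing additionally the relation between $|\lambda|^2$ and $\mathrm{Re}\,\lambda$ that comes from \eqref{1.1a}--\eqref{1.1b}, and requiring the reduced characteristic polynomial of $A_1$ (the elementary symmetric functions of the four eigenvalues) to have rational integer coefficients, a short case analysis leaves $k=3$, i.e.\ $n=13$, as the only possibility. Finally, the character table of a pseudocyclic skew-symmetric $4$-class scheme on $13$ points with valency $3$ is then completely determined, and by the uniqueness of such a scheme it is isomorphic to ${\rm Cyc}(13,4)$.
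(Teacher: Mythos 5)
There are two genuine gaps. First, you assert $c:=p_{1,1}^{2}>0$ without justification, and your whole reduction ($A_{2}=c^{-1}(A_{1}^{2}-aA_{1}-bA_{1^{*}})$) depends on it. Conditions \eqref{1.1a}--\eqref{1.1b} allow $R_{2}\notin R_{1}^{2}$ with $R_{2}\in R_{1}R_{1^{*}}$ instead; this configuration is perfectly consistent with the hypotheses a priori and must be excluded by an argument, not by fiat. (The paper does exactly this: in the case $I:=\{i\mid R_i\in R_1^2\}=\{1^{*}\}$ one gets $p_{1,1}^{1^{*}}=k$, $p_{1,1^{*}}^{2}=(k-1)/2$, and the quadratic identity from Lemma~\ref{jb}(iv), $\sum_{\alpha}(p_{1,1}^{\alpha})^{2}=k+2\sum_{\beta}(p_{1,1^{*}}^{\beta})^{2}$, forces $k=1$; the case $I=\{1,1^{*}\}$ is killed by a separate argument showing $J=\{0,1,1^{*}\}$ and hence $2=1^{*}$.) Second, and more seriously, the heart of your proof --- that the associativity identity $g(A_{1}A_{2}+A_{1}A_{2^{*}})-cA_{1^{*}}A_{2}\in\langle A_{0},A_{1},A_{1^{*}},A_{2},A_{2^{*}}\rangle$ plus positivity/integrality forces $R_{1}R_{2},R_{1^{*}}R_{2}\subseteq F$ --- is not carried out, and as stated it does not work. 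For a relation $R_{l}\notin F$ that identity only gives $gp_{1,2}^{l}+gp_{1,2^{*}}^{l}=cp_{1^{*},2}^{l}$, which is a balance condition, not a vanishing condition; when $g>0$ it permits nonzero values on both sides, and the valency equations $\sum_{l}p_{1,2}^{l}k_{l}=k^{2}$ add only one scalar constraint each. You yourself flag this as ``the main obstacle,'' which is an accurate self-assessment: this is where the actual content of the theorem lies. The paper's proof of $\{R_{2},R_{2^{*}}\}R_{1}\subseteq\{R_{1},R_{1^{*}},R_{2},R_{2^{*}}\}$ is combinatorial, not linear-algebraic: it fixes $(x,z)\in R_{2}$ and analyses neighbourhoods directly (Lemmas~\ref{R2} and~\ref{2.2}), using case-specific facts such as $k=3$ when $I=\{1^{*},2\}$ and $p_{1,1}^{2}\neq p_{1,1^{*}}^{2}$ when $|I|=3$ --- the latter again extracted from the quadratic identity. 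Nothing in your outline recovers these inputs.

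The pseudocyclic part is also only a sketch. Your trace identities correctly yield $s_{1}+s_{3}=-1$, $s_{1}s_{3}=-k$, but these hold for \emph{every} pseudocyclic skew-symmetric $4$-class scheme and cannot alone single out $k=3$; the decisive constraint is $p_{1,1}^{2^{*}}=0$ (immediate from \eqref{1.1a}), and converting it into a usable equation requires the explicit parametrization $|X|=u^{2}+4v^{2}$, $p_{1,1}^{2^{*}}=(|X|+1+2u\pm8v)/16$ of skew-symmetric fissions of conference graphs (the paper cites \cite{JM11}), after which a Diophantine estimate gives $|X|=13$ and uniqueness on $13$ points is quoted from \cite{MH96}. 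Your ``short case analysis'' and ``uniqueness of such a scheme'' gesture at these steps but supply neither the formula nor the reference, so the claim $k=3$ is not established. In summary, the overall architecture (show $\{R_{0},R_{1},R_{1^{*}},R_{2},R_{2^{*}}\}$ is closed, then classify the pseudocyclic case) agrees with the paper, but both load-bearing steps are missing.
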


\begin{remark}
Let $\mathfrak{X}=(X,\{R_{i}\}_{i=0}^{d})$ be a commutative  association scheme generated by a non-symmetric relation $R_{1}$. If $k_{1}=1$, then  $\mathfrak{X}$ is isomorphic to the group scheme over a cyclic group of order greater than $2$. For a definition of a group scheme, see {\rm\cite[Chapter \Rmnum{2}, Example 2.1 (2)]{EB84}}.
\end{remark}

As a natural directed version of distance-regular graphs (see \cite{AEB98,DKT16} for a background of the theory of distance-regular graphs), Wang and Suzuki \cite{KSW03} introduced the concept of weakly distance-regular digraphs. In \cite{SM03}, Miklavi\'c and Poto\v cnik gave the classification of distance-regular circulants. In this paper, we show that the attached schemes of primitive weakly distance-regular circulant digraphs satisfy the conditions of Theorem~\ref{main1}, and obtain the classification of such digraphs. In order to state our result, we introduce some basic notations and terminologies about weakly distance-regular circulant digraphs. See \cite{HS04,KSW03,KSW04,YYF16,YYF,YYF16+} for more details.

Let $\Gamma$ denote a finite simple digraph, which is not undirected. We write $V\Gamma$ and $A\Gamma$ for the vertex set and arc set of $\Gamma$, respectively. For any $x,y\in V\Gamma$, let $\partial(x,y)$ be the \emph{distance} from $x$ to $y$, and $\wz{\partial}(x,y):=(\partial(x,y),\partial(y,x))$ the \emph{two-way distance} from $x$ to $y$. An arc $(u,v)$ of $\Gamma$ is of \emph{type} $(1,r)$ if $\partial(v,u)=r$.

A strongly
connected digraph $\Gamma$ is said to be \emph{weakly distance-regular} if the configuration $\mathfrak{X}(\Gamma)=(V\Gamma,\{\Gamma_{\wz{i}}\}_{\wz{i}\in\wz{\partial}(\Gamma)})$ is an association scheme, where $\wz{\partial}(\Gamma)=\{\wz{\partial}(x,y)\mid x,y\in V\Gamma\}$ and $\Gamma_{\wz{i}}=\{(x,y)\in V\Gamma\times V\Gamma\mid\wz{\partial}(x,y)=\wz{i}\}$. We call $\mathfrak{X}(\Gamma)$ the \emph{attached scheme} of $\Gamma$. We say that $\Gamma$ is \emph{primitive} if $\mathfrak{X}(\Gamma)$ is primitive.

Let $H$ be a finite, multiplicatively written, group with identity $e$, and $S\subseteq H\setminus\{e\}$ be a generating set of $H$ containing an element $s$ such that $s^{-1}\notin S$. A \emph{Cayley digraph} of a group $H$ with respect
to the set $S$, denoted by $\textrm{Cay}(H,S)$, is the digraph with vertex set $H$, where
$x\in H$ is adjacent to $y\in H$ whenever $yx^{-1}\in S$. A digraph which is isomorphic to a Cayley digraph of a cyclic group is called a \emph{circulant digraph}.

Our second main theorem is as follows. See Section 2 for a definition of a Paley digraph.

\begin{thm}\label{Main2}
A digraph
$\Gamma$ is a primitve weakly distance-regular circulant digraph,
if and only if
$\Gamma$ is isomorphic to one of the following digraphs:
\begin{itemize}
\item [{\rm(i)}] the Paley digraph of order $p$, where $p$ is prime such that
$p\equiv 3\pmod4$.

\item [{\rm(ii)}] the circuit of length $p$, where $p$ is prime.

\item [{\rm(iii)}] ${\rm Cay}(\mathbb{Z}_{13},\{1,3,9\})$.
\end{itemize}
\end{thm}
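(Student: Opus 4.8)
The plan is to prove the two implications separately, putting almost all of the work into the ``only if'' direction, which I would route through Theorem~\ref{main1}. For the ``if'' direction I would simply exhibit the attached scheme of each family and check the three required properties (weak distance-regularity, circulant, primitivity) by hand. The Paley digraph of order $p$ is $\mathrm{Cay}(\mathbb{Z}_p,Q)$ with $Q$ the set of nonzero squares; since $p\equiv3\pmod4$ one has $-1\notin Q$, so the digraph is genuinely directed, its attached scheme is the cyclotomic scheme $\mathrm{Cyc}(p,2)$, and a direct two-way distance computation gives exactly the two non-diagonal relations $(1,2)$ and $(2,1)$. The circuit of length $p$ is $\mathrm{Cay}(\mathbb{Z}_p,\{1\})$, whose attached scheme is the group scheme of $\mathbb{Z}_p$. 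The last digraph is $\mathrm{Cay}(\mathbb{Z}_{13},\langle3\rangle)$ with $\langle3\rangle=\{1,3,9\}$, whose attached scheme is $\mathrm{Cyc}(13,4)$. In each case primitivity follows from the primality of $p$ (every nontrivial cyclotomic scheme, and the group scheme, over $\mathbb{Z}_p$ is primitive).

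For the ``only if'' direction, let $\Gamma=\mathrm{Cay}(\mathbb{Z}_n,S)$ be a primitive weakly distance-regular circulant digraph with attached scheme $\mathfrak{X}=\mathfrak{X}(\Gamma)$. First I would record the structure forced on $\mathfrak{X}$: since two-way distance is translation-invariant and depends only on $y-x$, $\mathfrak{X}$ is a translation scheme over the abelian group $\mathbb{Z}_n$, hence commutative; because $\Gamma$ is not undirected there is at least one non-symmetric relation $R_1$ of type $(1,r)$; and by primitivity $R_1$ generates $\mathfrak{X}$. Next I would use primitivity to force $n=p$ prime (a primitive association scheme over a cyclic group must have prime order, a proper nontrivial subgroup otherwise yielding an imprimitivity system), so that, by the structure of schemes over $\mathbb{Z}_p$, $\mathfrak{X}$ is cyclotomic and therefore pseudocyclic with constant non-trivial valency. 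In particular $k_1=k_2$ whenever $d\geq2$.

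The heart of the argument is a lemma asserting that, for $d\geq3$, the relation $R_1$ satisfies the hypotheses \eqref{1.1a}--\eqref{1.1c}. I would prove this by a distance analysis in the circulant: composing two arcs of type $(1,r)$ reaches out-distance at most $2$, so $R_1^2$ meets only relations with first coordinate $1$ or $2$, and commutativity together with the constant valency confines this to a single additional relation $R_2$, giving \eqref{1.1a}; a parallel analysis of $R_1R_{1^*}$ gives \eqref{1.1b}, while \eqref{1.1c} records that this $R_2$ is non-symmetric and distinct from $R_{1^*}$. With the lemma in hand I split on $k_1$. If $k_1=1$, the Remark after Theorem~\ref{main1} identifies $\mathfrak{X}$ as the group scheme of a cyclic group, so $\Gamma$ is the circuit of prime length, giving family~(ii). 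If $k_1>1$ and $d\geq3$, then $k_1=k_2>1$ and Theorem~\ref{main1} yields $d=4$; pseudocyclicity then pins $\mathfrak{X}$ to $\mathrm{Cyc}(13,4)$, forcing $\Gamma\cong\mathrm{Cay}(\mathbb{Z}_{13},\{1,3,9\})$ and family~(iii). The remaining possibility is $k_1>1$ with $d=2$, where only $R_0,R_1,R_{1^*}$ are present and \eqref{1.1c} cannot be arranged; here $\mathfrak{X}$ is the $2$-class pseudocyclic cyclotomic scheme $\mathrm{Cyc}(p,2)$, which is non-symmetric exactly when $p\equiv3\pmod4$, so $\Gamma$ is the Paley digraph, giving family~(i).

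The main obstacle I anticipate is the lemma establishing \eqref{1.1a}--\eqref{1.1c}: translating the purely metric ``two-way distance'' data into control over the products $R_1^2$ and $R_1R_{1^*}$ requires genuine care, and the crucial point is to rule out \emph{several} out-distance-$2$ relations entering $R_1^2$, leaving only the single relation $R_2$. This is where weak distance-regularity combined with the circulant (pseudocyclic) structure must be exploited fully. A secondary, more bookkeeping-level difficulty is confirming that the degenerate case $d=2$ exhausts precisely the configurations outside the scope of Theorem~\ref{main1}, so that no primitive weakly distance-regular circulant with $k_1>1$ escapes the trichotomy.
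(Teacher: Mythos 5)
Your overall architecture matches the paper's: reduce to a prime-order cyclotomic (hence pseudocyclic, skew-symmetric) translation scheme via \cite[Theorem 2.10.5]{AEB98}, peel off the $2$-class case (Paley) and the valency-one case (circuit), and feed everything else into Theorem~\ref{main1}. The ``if'' direction and the reduction to prime order are fine.

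The genuine gap is in what you yourself call the heart of the argument: you assert that ``commutativity together with the constant valency confines'' $R_1^2$ to a single additional relation $R_2$, but this does not follow from commutativity or from constant valency, and you never actually prove \eqref{1.1a}. The paper needs three separate nontrivial steps here. First, one must show all arcs have the same type $(1,g-1)$, which uses the fact (Lemma~\ref{lem:1}) that the non-diagonal relations of a cyclotomic scheme are permuted transitively by multiplication by nonzero scalars. Second --- and this is the step your proposal omits entirely --- one must show the girth $g$ equals $3$; the paper explicitly warns that this is nontrivial because there exist weakly distance-regular digraphs of girth $3$ and arbitrarily large diameter, and the proof is a delicate induction (Lemma~\ref{lem:2} plus Step~\ref{girth}) showing that if $g>3$ then the sets $Y_i=P_{(1,g-1),(1,g-1)}(x_{i-1},x_{i+1})$ satisfy $Y_i=Y_1^{(i)}$, forcing $p=g$ and hence $\Gamma$ to be a circuit. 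Without $g=3$, composing two arcs of type $(1,g-1)$ can land in many relations $(2,j)$ and your ``single relation $R_2$'' claim fails. Third, even with $g=3$ one still has $\Gamma_{(1,2)}^2\subseteq\{\Gamma_{(1,2)},\Gamma_{(2,1)},\Gamma_{(2,3)},\Gamma_{(2,4)}\}$, and excluding $\Gamma_{(2,4)}$ requires a further argument (Step~\ref{bkb}): transporting a $3$-circuit by a scalar $s$ with $\Gamma_{(1,2)}^{(s)}=\Gamma_{(2,4)}$ and exhibiting a pair at two-way distance $(3,3)$, contradicting skew-symmetry. Your proposal names this obstacle but supplies no mechanism to overcome it, so as written it is a plan rather than a proof.
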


\section{Preliminaries}

Let $\mathfrak{X}=(X,\{R_{i}\}_{i=0}^{d})$ be a commutative association scheme with $|X|=n$. The \emph{adjacency matrix} $A_{i}$ of $R_{i}$ is the $n\times n$ matrix whose $(x,y)$-entry is $1$ if $(x,y)\in R_{i}$ and $0$ otherwise. By the \emph{adjacency} or \emph{Bose-Mesner algebra} $\mathfrak{U}$ of $\mathfrak{X}$ we mean the algebra generated by $A_{0},A_{1},\ldots,A_{d}$ over the complex field. Axioms (i)--(iv) are equivalent to the following:
\[A_{0}=I,\quad \sum_{i=1}^{d}A_{i}=J,\quad A_{i}^{T}=A_{i^{*}},
\quad A_{i}A_{j}=\sum_{i=0}^{d}p_{i,j}^{l}A_{l},\]
where $I$ and $J$ are the identity and all-one matrices of order $n$, respectively.

Since $\mathfrak{U}$ consists of commuting normal matrices, it has a second basis consisting of primitive idempotents $E_{0}=J/n, E_{1},\ldots,E_{d}$. The integers $m_{i}=\textrm{rank}E_{i}$ are called the \emph{multiplicities} of $\mathfrak{X}$, and $m_{0}=1$ is said to be the trivial multiplicity. A commutative association scheme is called \emph{pseudocyclic} if all the non-trivial multiplicities coincide with each other.

Now we list basic properties of intersection
numbers.

\begin{lemma}[{\cite[Chapter \Rmnum{2}, Proposition 2.2]{EB84}}]\label{jb}
Let $(X,\{R_{i}\}_{i=0}^{d})$ be a commutative association scheme. The following hold:
\begin{itemize}
\item [{\rm(i)}] $k_{i}k_{j}=\sum_{l=0}^{d}p_{i,j}^{l}k_{l}$.

\item [{\rm(ii)}] $p_{i,j}^{l}k_{l}=p_{l,j^{*}}^{i}k_{i}=p_{i^{*},l}^{j}k_{j}$.

\item [{\rm(iii)}] $\sum_{j=0}^{d}p_{i,j}^{l}=k_{i}$.

\item [{\rm(iv)}] $\sum_{\alpha=0}^{d}p_{i,j}^{\alpha}p_{f,\alpha}^{l}=\sum_{\beta=0}^{d}p_{f,i}^{\beta}p_{\beta,j}^{l}$.
\end{itemize}
\end{lemma}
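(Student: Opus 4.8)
The plan is to work entirely with the adjacency matrices $A_0,\ldots,A_d$ and the multiplication rule $A_iA_j=\sum_l p_{i,j}^lA_l$, supported by two elementary preliminary facts that I would record at the outset, since they are the only places where the $*$-conventions must be pinned down carefully. First, unravelling axiom (iv) together with $R_{j^*}=R_j^T$ shows that $p_{i,j}^l$ equals the number of $z\in X$ with $(x,z)\in R_i$ and $(z,y)\in R_j$, for any fixed $(x,y)\in R_l$; this ``directed path'' reading is the combinatorial engine behind (i)--(iii). Second, since $k_i=p_{i,i^*}^0=|R_i(x)|$ is precisely the common row sum of $A_i$, we have $A_iJ=k_iJ$.

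Granting these, parts (i), (iii), and (iv) fall out of matrix manipulations. For (iii) I would compute $A_iJ=A_i\sum_jA_j=\sum_jA_iA_j=\sum_l\bigl(\sum_j p_{i,j}^l\bigr)A_l$ and compare with $A_iJ=k_iJ=\sum_l k_iA_l$; linear independence of the $A_l$ gives $\sum_j p_{i,j}^l=k_i$ (equivalently, summing the path count over all $j$ counts every $z\in R_i(x)$ exactly once). For (i) I would right-multiply the multiplication rule by $J$: one side is $A_iA_jJ=k_ik_jJ$, the other is $\sum_l p_{i,j}^lA_lJ=\bigl(\sum_l p_{i,j}^lk_l\bigr)J$. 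For (iv) I would expand the associativity relation $(A_fA_i)A_j=A_f(A_iA_j)$ in both orders, collect the coefficient of $A_l$ on each side, and again invoke linear independence of the $A_l$.

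Identity (ii) is the one demanding the most care, and I expect the $*$-bookkeeping there to be the only genuine obstacle. I would prove it by triple counting: set $T=\{(x,y,z):(x,y)\in R_l,\ (x,z)\in R_i,\ (z,y)\in R_j\}$ and evaluate $|T|$ in three ways according to which pair is fixed first. Fixing $(x,y)\in R_l$ (there are $nk_l$ such pairs) gives $|T|=nk_lp_{i,j}^l$; fixing $(x,z)\in R_i$ gives $|T|=nk_ip_{l,j^*}^i$, after rewriting the residual count $|R_l(x)\cap R_j(z)|$ as an intersection number via $|R_a(u)\cap R_{b^*}(v)|=p_{a,b}^c$ for $(u,v)\in R_c$; and fixing $(z,y)\in R_j$ gives $|T|=nk_jp_{i^*,l}^j$ in the same manner. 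Dividing by $n$ produces the chain $p_{i,j}^lk_l=p_{l,j^*}^ik_i=p_{i^*,l}^jk_j$.

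A clean alternative I would at least mention is the trace route. Since $\mathrm{tr}(A_mA_{l^*})=|R_m\cap R_l|=\delta_{m,l}\,nk_l$, one has $\mathrm{tr}(A_iA_jA_{l^*})=nk_lp_{i,j}^l$; then invariance of the trace under cyclic permutation, combined with the transpose identity $A_i^T=A_{i^*}$ (so that $\mathrm{tr}(A_iA_jA_{l^*})=\mathrm{tr}(A_lA_{j^*}A_{i^*})=\mathrm{tr}(A_{i^*}A_lA_{j^*})$), permutes the indices to recover the same three expressions. Here the delicate point is exactly the correct placement of the stars under transposition, which is why I would fix the conventions carefully before starting; once that is done, either route is short.
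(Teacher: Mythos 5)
Your proof is correct in every part: the path-counting interpretation of $p_{i,j}^l$ and the identity $A_iJ=k_iJ$ are set up with the right $*$-conventions, the Bose--Mesner manipulations give (i), (iii), (iv) (using linear independence of the $A_l$, which holds since the relations are disjoint and nonempty), and the triple count for (ii) correctly produces $k_lp_{i,j}^l=k_ip_{l,j^*}^i=k_jp_{i^*,l}^j$; the trace variant is also sound. Note that the paper itself offers no proof of this lemma --- it is quoted verbatim from \cite[Chapter \Rmnum{2}, Proposition 2.2]{EB84} --- so there is no internal argument to compare against; your proof is essentially the standard one found in that reference. One small observation worth making: your argument never uses commutativity, so it establishes the identities for arbitrary (not necessarily commutative) association schemes, which is indeed the generality in which Bannai and Ito state them.
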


We call an association scheme $(X,\{R_{i}\}_{i=0}^{d})$ a \emph{translation scheme} if, the underlying set $X$ has the structure of an additive group,  and for all relations $R_{i}$,
\[(x,y)\in R_{i}\Longleftrightarrow(x+z,y+z)\in R_{i}\quad\textrm{for all $z\in X$}.\]

A classical example of translation schemes is the cyclotomic scheme which we describe now. Let $\textrm{GF}(q)$ be the finite field with $q$ elements, where $q$ is a prime power, and $\alpha$ be a primitive element of $\textrm{GF}(q)$. For a fixed divisor $d$ of $q-1$, define
\[(x,y)\in R_{i}\quad\textrm{if}~x-y\in\alpha^{i}\langle\alpha^{d}\rangle,~1\leq i\leq d.\]
These relations $R_{i}$ define a pseudocyclic association scheme, denoted by $\textrm{Cyc}(q,d)$, called the $d$-class \emph{cyclotomic scheme} over ${\rm GF}(q)$.

\begin{lemma}\label{lem:1}
Let $({\rm GF}(q),\{R_{i}\}_{i=0}^{d})$ be the $d$-class cyclotomic scheme. For $1\leq i\leq d$ and $s\in{\rm GF}(q)\setminus\{0\}$, define
\begin{align}
R_{i}^{(s)}=\{(sx,sy)\mid(x,y)\in R_{i}\}.\nonumber
\end{align}
Then, for any fixed $i\in\{1,2,\ldots,d\}$,
\begin{align}
\{R_{i}^{(s)}\mid s\in{\rm GF}(q)\setminus\{0\}\}=\{R_{j}\mid1\leq j\leq d\}.\nonumber
\end{align}
\end{lemma}
\begin{proof}
Let $\alpha$ be a primitive element of ${\rm GF}(q)$. Then
\begin{align}
\{s\alpha^{i}\langle\alpha^{d}\rangle\mid s\in{\rm GF}(q)\setminus\{0\}\}=\{\alpha^{j}\langle\alpha^{d}\rangle\mid1\leq j\leq d\},\nonumber
\end{align}
and the result follows immediately from this.
\end{proof}

As a consequence of Lemma~\ref{lem:1}, as digraphs, we have $(\textrm{GF}(q),R_i)\cong (\textrm{GF}(q),R_j)$
for $1\leq i,j\leq d$. For example when $(q,d)=(13,4)$ we have $(\textrm{GF}(13),R_i)\cong \textrm{Cay}(\mathbb{Z}_{13},\{1,3,9\})$ for $1\leq i\leq 4$.

\begin{lemma}\label{skew}
Let $\mathfrak{X}=(X,\{R_{0},R_{1},R_{1^{*}},R_{2},R_{2^{*}}\})$ be a $4$-class skew-symmetric and pseudocyclic association scheme. If $|X|>5$ and $p_{1,1}^{2^{*}}=0$, then $\mathfrak{X}$ is isomorphic to ${\rm Cyc}(13,4)$.
\end{lemma}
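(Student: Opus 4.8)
The plan is to squeeze all the rigidity out of the pseudocyclic and skew-symmetric hypotheses, reduce every parameter to a single integer $m$, and then force $m=3$. I would first record the numerical consequences. Let $m$ be the common nontrivial multiplicity, so that every nontrivial eigenvalue of each $A_{i}$ occurs with multiplicity $m$. For $i\neq 0$ one has $\operatorname{tr}A_{i}=0$, and since the all-ones eigenspace contributes the eigenvalue $k_{i}$, the four remaining eigenvalues of $A_{i}$ sum to $-k_{i}/m$; being a rational algebraic integer this is an integer, so $m\mid k_{i}$. As $k_{1}+k_{1^{*}}+k_{2}+k_{2^{*}}=n-1=4m$ with each $k_{i}\geq 1$, all four nontrivial valencies equal $m$ and $n=4m+1$. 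Skew-symmetry makes $A_{1}A_{1^{*}}=A_{1}A_{1}^{T}$ symmetric, so $p_{1,1^{*}}^{1}=p_{1,1^{*}}^{1^{*}}$ and $p_{1,1^{*}}^{2}=p_{1,1^{*}}^{2^{*}}$; feeding this into Lemma~\ref{jb}(i) gives $m=1+2(p_{1,1^{*}}^{1}+p_{1,1^{*}}^{2})$, so $m$ is odd, and $|X|>5$ forces $m\geq 3$. It therefore suffices to prove $m=3$.

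Second, I would pass to the symmetrization. Merging each relation with its transpose yields the $2$-class symmetric scheme with relations $R_{0}$, $R_{1}\cup R_{1^{*}}$, $R_{2}\cup R_{2^{*}}$, i.e.\ a strongly regular graph $\Gamma$ with adjacency matrix $B=A_{1}+A_{1^{*}}$ of valency $2m$. Comparing $\operatorname{tr}B=0$ with the two nontrivial eigenvalues $\tfrac{-1\pm\sqrt{4m+1}}{2}$ of $B$ forces their multiplicities to be equal (both $2m$), so $\Gamma$ is a conference graph $\mathrm{srg}(4m+1,2m,m-1,m)$ (and $4m+1$ is not a square, as $m$ is odd, so these eigenvalues are irrational). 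Since $A_{1}$ differs from a scalar by a skew-symmetric matrix on each eigenspace of $B$, the four nontrivial eigenvalues of $A_{1}$ fall into two complex-conjugate pairs with real parts $\tfrac{-1\pm\sqrt{4m+1}}{4}$, and the identity $\operatorname{tr}(A_{1}A_{1^{*}})=nm$ gives $\sum_{j}|\lambda_{j}|^{2}=3m+1$ for the four nontrivial eigenvalues $\lambda_{j}$; writing $\theta,\phi$ for representatives of the two pairs, this yields $|\theta|^{2}+|\phi|^{2}=\tfrac{3m+1}{2}$.

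Third — and this is where the work lies — I would pin down the remaining data and force $m=3$. Using $p_{1,1}^{0}=p_{1,1}^{2^{*}}=0$ I may write $A_{1}^{2}=\alpha A_{1}+\beta A_{1^{*}}+\gamma A_{2}$; the trace computation $\operatorname{tr}A_{1}^{3}=\beta\,n\,m$ together with Newton's identities then expresses the nontrivial factor of the characteristic polynomial of $A_{1}$ as an explicit quartic in $\mathbb{Z}[x]$ whose coefficients are determined by $m$ and $\beta$. Imposing that this quartic have integer coefficients, and that every intersection number (obtained from $\alpha,\beta,\gamma$ via Lemma~\ref{jb}(ii) and the associativity relation Lemma~\ref{jb}(iv)) be a nonnegative integer, produces a Diophantine system in $m$. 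The hard part will be to show this system has no solution for odd $m\geq 5$: I expect that for such $m$ some intersection number is forced to be negative or non-integral, since mere integrality of the eigenvalues is insufficient — already $m=7$ yields the irreducible integral quartic $x^{4}+x^{3}+4x^{2}+20x+23$. Ruling out all $m\geq 5$ by nonnegativity leaves $m=3$, hence $n=13$.

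Finally, with $n=13$ prime and all intersection numbers determined, I would invoke the classification of association schemes of prime order: every such scheme is (isomorphic to) a cyclotomic scheme over $\mathbb{Z}_{13}$, its nontrivial relations being the orbits of a subgroup of $\mathbb{Z}_{13}^{*}$. A skew-symmetric $4$-class scheme of this form corresponds to the index-$4$ subgroup $\langle\alpha^{4}\rangle$, which is precisely $\mathrm{Cyc}(13,4)$; matching the parameters computed above identifies $\mathfrak{X}$ with $\mathrm{Cyc}(13,4)$, as claimed.
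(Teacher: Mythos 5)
Your proposal correctly sets up the standard numerology of a pseudocyclic skew-symmetric $4$-class scheme (all nontrivial valencies equal the common multiplicity $m$, $n=4m+1$, $m$ odd, symmetrization a conference graph), and your endgame for $n=13$ is essentially the paper's (it cites the Hirasaka--Suga classification of schemes on $13$ points; note that ``every association scheme of prime order is cyclotomic'' is not a known theorem in general, so you do need the exhaustive classification for $13$ specifically rather than a general principle). But the heart of the lemma --- proving that $p_{1,1}^{2^{*}}=0$ forces $m=3$ --- is not actually established. You describe a plan (Newton's identities, integrality of the quartic factor of the characteristic polynomial of $A_{1}$, nonnegativity of all intersection numbers, leading to ``a Diophantine system in $m$''), and then explicitly write that ``the hard part will be to show this system has no solution for odd $m\geq5$,'' offering only the expectation that nonnegativity will do the job. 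That expectation is not a proof, and your own observation that $m=7$ already yields an irreducible integral quartic shows that integrality alone is insufficient; you never exhibit the negative or non-integral intersection number that would kill a given $m\geq 5$, nor any argument that handles infinitely many $m$ at once. This is a genuine gap, not a routine verification.

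The paper closes exactly this gap by quoting the explicit parameter formulas from Ma--Wang \cite{JM11}: there are integers $u,v$ with $|X|=u^{2}+4v^{2}$ and $p_{1,1}^{2^{*}}=\bigl(|X|+1+2u\pm 8v\bigr)/16$. Setting this to zero eliminates $v$, turns $|X|=u^{2}+4v^{2}$ into the quadratic $20u^{2}+4(|X|+1)u+|X|^{2}-14|X|+1=0$ in $u$, and the requirement that $u$ be a real integer bounds $|X|\leq 9+4\sqrt{5}<18$; combined with $|X|\equiv 5\pmod 8$ and $|X|>5$ this leaves only $|X|=13$. If you want to salvage your route, you would need either to derive these (or equivalent) closed-form expressions for the intersection numbers of a skew-symmetric fission of a conference graph yourself, or to find some other uniform argument covering all odd $m\geq 5$; as written, the proposal does not prove the lemma.
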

\begin{proof}
The symmetrization $(X,\{R_{0},R_{1}\cup R_{1^{*}},R_{2}\cup R_{2^{*}}\})$ of $\mathfrak{X}$ is a pseudocyclic association scheme of class $2$. Thus, $\mathfrak{X}$ is a skew-symmetric fission of a conference graph (see \cite{JM11} for definitions). By the proof of \cite[Theorem 3.3]{JM11}, there exist integers $u$ and $v$ such that
\begin{align}
|X|&=u^{2}+4v^{2},\label{skew1}\\
p_{1,1}^{2^{*}}&=\frac{|X|+1+2u\pm8v}{16}\nonumber.
\end{align}
Since $p_{1,1}^{2^{*}}=0$, we have $v^{2}=(|X|+1+2u)^{2}/64$. Substituting this into \eqref{skew1}, we obtain
\[20u^{2}+4(|X|+1)u+|X|^{2}-14|X|+1=0.\]
Since $u$ is an integer, the discriminant of the quadratic is not less than zero. Thus
\[(|X|+1)^{2}-5(|X|^{2}-14|X|+1)\geq0.\]
It follows that $9-4\sqrt{5}\leq |X|\leq9+4\sqrt{5}$. By \cite[Theorem 3.3]{JM11}, one gets $|X|\equiv 5~(\textrm{mod}~8)$, which implies $|X|=13$. In view of \cite[Result 1]{MH96}, $\mathfrak{X}$ is isomorphic to $\textrm{Cyc}(13,4)$.
\end{proof}

We close the section with a definition of a Paley digraph. For a prime power
$q=p^{m}$, $q\equiv 3~(\textrm{mod}~4)$, let $(\textrm{GF}(q),\{R_{i}\}_{i=0}^{2})$ denote the cyclotomic scheme $\textrm{Cyc}(q,2)$. The \emph{Paley digraph of order $q$} is defined as the digraph $(\textrm{GF}(q),R_{1})$. It is
an easy observation that Paley digraphs are weakly distance-regular.

\section{The proof of Theorem~\ref{main1}}

We note that the second statement of Theorem~\ref{main1} follows immediately from Lemma~\ref{skew}. To prove the first statement of Theorem~\ref{main1}, we write $k_{1}=k_{2}=k$. We set $I:=\{i\mid R_{i}\in R_{1}^{2}\}$ and $J:=\{i\mid R_{i}\in R_{1}R_{1^{*}}\}$. Then $I\subseteq\{1,1^{*},2\}$ by (\ref{1.1a}). Since $R_{1}R_{1^{*}}=R_{1^{*}}R_{1}$ from the commutativity of $\mathfrak{X}$, we get $J=J^{*}$. Since $k>1$, we have $\{0\}\subsetneqq J$. It follows from \eqref{1.1b} that
\begin{align}
J=\{0,1,1^{*}\},~\{0,2,2^{*}\}~\textrm{or}~\{0,1,1^{*},2,2^{*}\}.\label{set J}
\end{align}

\begin{lemma}\label{jiben}
The following hold:
\begin{itemize}
\item [{\rm(i)}] $\sum_{j=0}^{d}p_{1,j}^{l}=k$.

\item [{\rm(ii)}] $p_{i,j}^{l}=p_{l,j^{*}}^{i}$, $l,i\in\{1,1^{*},2,2^{*}\}$.

\item [{\rm(iii)}] $\sum_{l\in I}p_{1,1}^{l}=k$.

\item [{\rm(iv)}] $2\sum_{l\in J\cap\{1,2\}}p_{1,1^{*}}^{l}=k-1$.

\item [{\rm(v)}] $\sum_{\alpha\in I}(p_{1,1}^{\alpha})^{2}=k+2\sum_{\beta\in J\cap\{1,2\}}(p_{1,1^{*}}^{\beta})^{2}$.
\end{itemize}
\end{lemma}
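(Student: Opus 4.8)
The plan is to reduce everything to the basic identities in Lemma~\ref{jb}, exploiting two elementary but crucial observations. First, the valency of any relation equals that of its transpose, so $k_{1^{*}}=k_{1}$ and $k_{2^{*}}=k_{2}$; combined with the hypothesis $k_{1}=k_{2}=k$ this gives $k_{1}=k_{1^{*}}=k_{2}=k_{2^{*}}=k$, and it is this equality that makes the valency-weighted identities of Lemma~\ref{jb} collapse into the stated unweighted ones. Second, the matrix $A_{1}A_{1^{*}}$ is symmetric, since $(A_{1}A_{1^{*}})^{T}=A_{1^{*}}^{T}A_{1}^{T}=A_{1}A_{1^{*}}$; comparing coefficients in $A_{1}A_{1^{*}}=\sum_{l\in J}p_{1,1^{*}}^{l}A_{l}$ with those in its transpose yields $p_{1,1^{*}}^{l}=p_{1,1^{*}}^{l^{*}}$ for every $l$. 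Together with $p_{1,1^{*}}^{0}=k_{1}=k$ and $J=J^{*}$, this lets me fold the sum over $J\setminus\{0\}$ into twice the sum over $J\cap\{1,2\}$ in parts (iv) and (v).

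Item (i) is immediate from Lemma~\ref{jb}(iii) with $i=1$. Item (ii) is Lemma~\ref{jb}(ii): the identity $p_{i,j}^{l}k_{l}=p_{l,j^{*}}^{i}k_{i}$ becomes $p_{i,j}^{l}=p_{l,j^{*}}^{i}$ once I cancel the common factor $k_{i}=k_{l}=k$, which is legitimate since $i,l\in\{1,1^{*},2,2^{*}\}$ and $k>1$. For item (iii) I start from Lemma~\ref{jb}(i), namely $k^{2}=k_{1}^{2}=\sum_{l}p_{1,1}^{l}k_{l}$; the nonzero terms have $l\in I\subseteq\{1,1^{*},2\}$, where $k_{l}=k$, so dividing by $k$ yields $\sum_{l\in I}p_{1,1}^{l}=k$.

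For item (iv) I again use Lemma~\ref{jb}(i), this time $k^{2}=k_{1}k_{1^{*}}=\sum_{l\in J}p_{1,1^{*}}^{l}k_{l}$. Peeling off the term $l=0$, which contributes $p_{1,1^{*}}^{0}k_{0}=k$, and using $k_{l}=k$ for the remaining $l\in J\setminus\{0\}\subseteq\{1,1^{*},2,2^{*}\}$, I get $\sum_{l\in J\setminus\{0\}}p_{1,1^{*}}^{l}=k-1$. The symmetry $p_{1,1^{*}}^{l}=p_{1,1^{*}}^{l^{*}}$ together with $J=J^{*}$ pairs $1$ with $1^{*}$ and $2$ with $2^{*}$, so this sum equals $2\sum_{l\in J\cap\{1,2\}}p_{1,1^{*}}^{l}$, giving the claim.

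Item (v) is the real content, and I would obtain it by computing $\textrm{tr}(A_{1}^{2}(A_{1}^{2})^{T})$ in two ways, using the orthogonality relation $\textrm{tr}(A_{l}A_{m})=nk_{l}\delta_{l,m^{*}}$. Expanding directly from $A_{1}^{2}=\sum_{\alpha\in I}p_{1,1}^{\alpha}A_{\alpha}$ gives $\textrm{tr}(A_{1}^{2}(A_{1}^{2})^{T})=n\sum_{\alpha\in I}(p_{1,1}^{\alpha})^{2}k_{\alpha}=nk\sum_{\alpha\in I}(p_{1,1}^{\alpha})^{2}$, since $k_{\alpha}=k$ on $I$. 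On the other hand, commutativity of the Bose--Mesner algebra lets me rewrite $A_{1}^{2}(A_{1}^{2})^{T}=A_{1}A_{1}A_{1^{*}}A_{1^{*}}=(A_{1}A_{1^{*}})^{2}$; expanding the symmetric matrix $A_{1}A_{1^{*}}=\sum_{l\in J}p_{1,1^{*}}^{l}A_{l}$ and applying the same trace formula yields $n\sum_{l\in J}(p_{1,1^{*}}^{l})^{2}k_{l}=nk^{2}+2nk\sum_{\beta\in J\cap\{1,2\}}(p_{1,1^{*}}^{\beta})^{2}$, where the $nk^{2}$ comes from the $l=0$ term and the pairing is as in (iv). Equating the two expressions and cancelling $nk$ gives the identity. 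The main obstacle is precisely this step: I must use commutativity to collapse $A_{1}A_{1}A_{1^{*}}A_{1^{*}}$ into $(A_{1}A_{1^{*}})^{2}$, so that the right-hand side is expressed through the $J$-intersection numbers rather than a more complicated quadruple sum, while tracking the solitary $l=0$ contribution correctly.
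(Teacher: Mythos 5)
Your proposal is correct. Parts (i)--(iv) follow essentially the paper's route: direct substitution into Lemma~\ref{jb}, with the valency identities $k_{1}=k_{1^{*}}=k_{2}=k_{2^{*}}=k$ doing the collapsing, and the symmetry $p_{1,1^{*}}^{l}=p_{1,1^{*}}^{l^{*}}$ (which you get from $(A_{1}A_{1^{*}})^{T}=A_{1}A_{1^{*}}$, and which the paper extracts from its part (ii), as recorded in \eqref{2.1b} and \eqref{2.1a}) handling the pairing of $l$ with $l^{*}$. Where you genuinely diverge is part (v): the paper obtains it in one line by setting $i=j=l=1$, $f=1^{*}$ in the associativity identity of Lemma~\ref{jb}(iv), namely $\sum_{\alpha}p_{1,1}^{\alpha}p_{1^{*},\alpha}^{1}=\sum_{\beta}p_{1^{*},1}^{\beta}p_{\beta,1}^{1}$, and then converting both sides with part (ii); you instead compute $\mathrm{tr}\bigl(A_{1}^{2}(A_{1}^{2})^{T}\bigr)$ in two ways using the orthogonality relation $\mathrm{tr}(A_{l}A_{m})=np_{l,m}^{0}=nk_{l}\delta_{m,l^{*}}$ and commutativity to rewrite $A_{1}A_{1}A_{1^{*}}A_{1^{*}}=(A_{1}A_{1^{*}})^{2}$. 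The two computations are equivalent in substance (the trace identity is the matrix-algebra shadow of Lemma~\ref{jb}(iv)), but yours is more self-contained at the Bose--Mesner level and makes the appearance of the squares $(p_{1,1}^{\alpha})^{2}$ and $(p_{1,1^{*}}^{\beta})^{2}$ transparent, at the modest cost of invoking the trace formula, which the paper never states explicitly but which follows at once from $p_{l,m}^{0}=k_{l}\delta_{m,l^{*}}$. Your bookkeeping of the $l=0$ term ($p_{1,1^{*}}^{0}=k$, $k_{0}=1$, contributing $nk^{2}$) and the final cancellation of $nk$ are both accurate.
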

\begin{proof} (i) follows by setting $i=1$ in Lemma~\ref{jb} (iii). (ii) holds by Lemma~\ref{jb} (ii) since $k_{1}=k_{2}$. (iii) follows by setting $i=j=1$ in Lemma~\ref{jb} (i). (iv) follows by setting $i=1$ and $j=1^{*}$ in Lemma~\ref{jb} (i), and using (ii).  (v) follows by setting $i=j=l=1$ and $f=1^{*}$ in Lemma~\ref{jb} (iv), and using (ii).
\end{proof}

We list some consequences of Lemma~\ref{jiben} (ii) as follows:
\begin{align}
p_{1,1^{*}}^{2}&=p_{1^{*},1}^{2}=p_{1,2}^{1}=p_{2,1}^{1}=p_{2^{*},1}^{1}=p_{1,2^{*}}^{1}, \label{2.1b}\\
p_{1,1^{*}}^{1}&=p_{1,1^{*}}^{1^{*}}=p_{1,1}^{1}, \label{2.1a}\\
p_{1,1}^{2}&=p_{1^{*},2}^{1}. \label{2.1d}
\end{align}

In the proof of Theorem~\ref{main1}, we also need the following auxiliary lemmas.

\begin{lemma}\label{3.2}
The following conditions are equivalent:
\begin{itemize}
\item [{\rm(i)}] $1\in I$.

\item [{\rm(ii)}] $p_{1,1}^{1}\neq0$.

\item [{\rm(iii)}] $\{1,1^{*}\}\subseteq J$.
\end{itemize}
\end{lemma}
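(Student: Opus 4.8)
The plan is to observe that all three conditions reduce to the single inequality $p_{1,1}^{1}\neq0$, so that the equivalences follow by unwinding the definitions of $I$ and $J$ together with the identities already recorded in \eqref{2.1a}.

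First I would establish the equivalence of (i) and (ii) directly from the definition of the set product. By definition $1\in I$ means $R_{1}\in R_{1}^{2}=R_{1}R_{1}$, and, since the double sum defining the product collapses to the single term $p_{1,1}^{1}$ when both factors equal $\{R_{1}\}$, this membership holds precisely when $p_{1,1}^{1}\neq0$. Hence (i) and (ii) are literally the same statement, with no computation involved.

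Next, for the equivalence of (ii) and (iii), I would invoke the chain of equalities $p_{1,1^{*}}^{1}=p_{1,1^{*}}^{1^{*}}=p_{1,1}^{1}$ from \eqref{2.1a}. The condition $\{1,1^{*}\}\subseteq J$ asserts that both $R_{1}$ and $R_{1^{*}}$ lie in $R_{1}R_{1^{*}}$, that is, $p_{1,1^{*}}^{1}\neq0$ and $p_{1,1^{*}}^{1^{*}}\neq0$. By \eqref{2.1a} each of these two inequalities is equivalent to $p_{1,1}^{1}\neq0$, so (iii) holds if and only if (ii) does. In fact the equalities in \eqref{2.1a} show that the three statements $1\in J$, $1^{*}\in J$, and $1\in I$ are all simultaneously true or simultaneously false, which immediately closes the cycle of implications.

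Since the whole argument rests on the precomputed identities in \eqref{2.1a}, which are themselves consequences of Lemma~\ref{jiben}(ii), there is no genuine obstacle to overcome. The only point requiring care is the correct reading of the definition of the product $EF$: membership of a single relation $R_{l}$ in $R_{1}R_{1^{*}}$ is governed by the one intersection number $p_{1,1^{*}}^{l}$, and because intersection numbers are nonnegative, nonvanishing of the defining sum coincides with nonvanishing of that single coefficient.
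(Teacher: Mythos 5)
Your proof is correct and follows exactly the paper's route: the paper's entire proof is ``Immediate from \eqref{2.1a},'' and you have simply unpacked that one-line argument by reading off the definitions of $I$ and $J$ and applying the chain $p_{1,1^{*}}^{1}=p_{1,1^{*}}^{1^{*}}=p_{1,1}^{1}$. No differences of substance.
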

\begin{proof}
Immediate from \eqref{2.1a}.
\end{proof}

\begin{lemma}\label{R2}
Let $\{R_{i}\mid i\in K\}$ be a subset of $\{R_{i}\}_{i=0}^{d}$. Fix $(x,z)\in R_{2}$. Then
\begin{itemize}
\item [{\rm(i)}] $R_{2}R_{1}\subseteq\{R_{i}\mid i\in K\}$ if and only if $(x,w)\in \bigcup_{i\in K}R_{i}$ for all $w\in R_{1}(z)$.

\item [{\rm(ii)}] $R_{2^{*}}R_{1}\subseteq\{R_{i}\mid i\in K\}$ if and only if $(z,w)\in \bigcup_{i\in K}R_{i}$ for all $w\in R_{1}(x)$.
\end{itemize}
\end{lemma}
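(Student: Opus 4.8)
The plan is to prove both parts directly from the definitions of relational composition and the sets $R_i(x)$, treating each as a simple "transitivity" characterization. For part (i), the statement $R_2 R_1 \subseteq \{R_i \mid i \in K\}$ unpacks as: whenever a relation $R_l$ satisfies $\sum_{R_i \in R_2, R_j \in R_1} p_{i,j}^l = p_{2,1}^l \neq 0$, we must have $l \in K$. The first step is to recall that $p_{2,1}^l \neq 0$ means exactly that, for some (equivalently, for the fixed) pair $(x,w) \in R_l$, there is an intermediate point $z$ with $(x,z) \in R_2$ and $(z,w) \in R_1$. So I would phrase membership of $R_l$ in $R_2 R_1$ in terms of the existence of a two-step path through an $R_2$-neighbour followed by an $R_1$-step.

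The key manoeuvre is to fix $(x,z) \in R_2$ once and for all, as the statement allows, and then let $w$ range over $R_1(z)$. First I would observe that as $w$ ranges over $R_1(z)$, the pairs $(x,w)$ realize precisely those relations $R_l$ with $p_{2,1}^l \neq 0$: indeed $(x,w) \in R_l$ forces $p_{2,1}^l \neq 0$ (the path $x \to z \to w$ witnesses a nonzero count), and conversely every $R_l$ with $p_{2,1}^l \neq 0$ is hit for this particular $(x,z)$ because the intersection number $p_{2,1}^l$ counts exactly such intermediate points and is constant on $R_2$, hence nonzero for our fixed pair whenever it is nonzero at all. This shows $R_2 R_1 = \{R_l \mid (x,w) \in R_l \text{ for some } w \in R_1(z)\}$, and the claimed equivalence follows: $R_2 R_1 \subseteq \{R_i \mid i \in K\}$ iff every such $(x,w)$ lies in $\bigcup_{i \in K} R_i$.

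For part (ii), the argument is the mirror image. The composition $R_{2^*} R_1$ collects the $R_l$ with $p_{2^*,1}^l \neq 0$, which detects a path $z' \to x' \to w'$ with $(z',x') \in R_{2^*}$, i.e. $(x',z') \in R_2$, and $(x',w') \in R_1$. With the fixed pair $(x,z) \in R_2$ I would set the middle vertex to be $x$: then $(z,x) \in R_{2^*}$, and letting $w$ range over $R_1(x)$ realizes precisely the relations $R_l$ appearing in $R_{2^*} R_1$, by the same counting-constancy argument as before. Hence $R_{2^*} R_1 \subseteq \{R_i \mid i \in K\}$ iff $(z,w) \in \bigcup_{i \in K} R_i$ for all $w \in R_1(x)$. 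I do not expect any serious obstacle here; the only point requiring a little care is justifying that a single fixed pair $(x,z) \in R_2$ suffices to detect \emph{all} nonzero intersection numbers $p_{2,1}^l$ (resp. $p_{2^*,1}^l$), which is exactly the content of axiom (iv) guaranteeing these counts are constant across $R_2$.
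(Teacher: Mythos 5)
Your overall strategy is the same as the paper's: reduce the containment $R_{2}R_{1}\subseteq\{R_{i}\mid i\in K\}$ to a statement about the single fixed pair $(x,z)\in R_{2}$. The forward direction is fine: $w\in R_{1}(z)$ with $(x,w)\in R_{l}$ gives $z\in R_{2}(x)\cap R_{1^{*}}(w)$, so $p_{2,1}^{l}\neq0$. But the converse --- the step you yourself flag as the only point requiring care --- is justified incorrectly as written. You say every $R_{l}$ with $p_{2,1}^{l}\neq0$ is hit by the fixed pair ``because the intersection number $p_{2,1}^{l}$ counts exactly such intermediate points and is constant on $R_{2}$.'' That is not what axiom (iv) gives you: $p_{2,1}^{l}=|R_{2}(x')\cap R_{1^{*}}(w')|$ is constant as $(x',w')$ ranges over $R_{l}$, not over $R_{2}$. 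Constancy of that count over $R_{l}$ does not by itself say anything about which relations are realized by the pairs $(x,w)$, $w\in R_{1}(z)$, for your particular choice of $(x,z)$.

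The quantity that is constant over $R_{2}$ is $p_{l,1^{*}}^{2}=|R_{l}(x)\cap R_{1}(z)|$, the number of $w\in R_{1}(z)$ with $(x,w)\in R_{l}$. So the step you need is the equivalence $p_{2,1}^{l}\neq0\Leftrightarrow p_{l,1^{*}}^{2}\neq0$. This is exactly how the paper argues, citing Lemma~\ref{jb}(ii): $p_{2,1}^{l}k_{l}=p_{l,1^{*}}^{2}k_{2}$ with both valencies positive. Alternatively, you can avoid the valency identity by noting that any witness $(x',z',w')$ for $p_{2,1}^{l}\neq0$ satisfies $(x',z')\in R_{2}$ and $w'\in R_{l}(x')\cap R_{1}(z')$, hence shows $p_{l,1^{*}}^{2}\geq1$, and then invoke constancy of $p_{l,1^{*}}^{2}$ over $R_{2}$ to transfer this to your fixed pair. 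With either repair your argument for (i) closes, and (ii) goes through by the same substitution with $2^{*}$ in place of $2$ and middle vertex $x$, as you indicate.
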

\begin{proof}
Note that $p_{2,1}^{i}\neq0$ is equivalent to $p_{i,1^{*}}^{2}\neq0$ by Lemma~\ref{jb} (ii), which in turn is equivalent to $R_{i}(x)\cap R_{1}(z)\neq\emptyset$. Thus, $R_{2}R_{1}\subseteq\{ R_{i}\mid i\in K\}$ is equivalent to $R_{1}(z)\subseteq \bigcup_{i\in K}R_{i}(x)$. This proves (i). The proof of (ii) is similar, hence omitted.
\end{proof}

\begin{lemma}\label{d=4}
If $\{R_{2},R_{2^{*}}\}R_{1}\subseteq\{R_{1},R_{1^{*}},R_{2},R_{2^{*}}\}$, then $d=4$.
\end{lemma}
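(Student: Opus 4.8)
The plan is to show that the five relations $F:=\{R_0,R_1,R_{1^*},R_2,R_{2^*}\}$ already exhaust the relations of $\mathfrak{X}$. They are pairwise distinct: $R_1\neq R_{1^*}$ by the non-symmetry of $R_1$, and \eqref{1.1c} together with the distinct indexing forces $0,1,1^*,2,2^*$ to be pairwise distinct; note also that $F=F^*$. Hence it suffices to prove that \emph{every} relation of $\mathfrak{X}$ lies in $F$, for then $d+1=|F|=5$ and $d=4$. Rather than verify directly that $F$ is closed --- which would force me to control the products $R_2^2$, $R_2R_{2^*}$ and $R_{2^*}^2$, none of which the hypothesis touches --- I would argue through the set of relations by which $F$ is stable under right multiplication.

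Concretely, set $\mathcal{C}:=\{R_l\mid FR_l\subseteq F\}$ and write $B_F=\sum_{R_i\in F}A_i$. The crucial elementary observation is that
\[B_FA_l=\sum_m\Big(\sum_{R_i\in F}p_{i,l}^{m}\Big)A_m\]
has non-negative coefficients, so a coefficient indexed by $R_m\notin F$ vanishes if and only if each $p_{i,l}^{m}$ with $R_i\in F$ vanishes. Consequently $R_l\in\mathcal{C}$ if and only if $R_iR_l\subseteq F$ for every $R_i\in F$; this cancellation-free reformulation is what will let me propagate membership in $\mathcal{C}$ through products.

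First I would check $R_0,R_1,R_{1^*}\in\mathcal{C}$ by running over the finitely many products $R_iR_1$ and $R_iR_{1^*}$ with $R_i\in F$: the factor $R_0$ is trivial, $R_1^2$ and $R_{1^*}^2=(R_1^2)^*$ are handled by \eqref{1.1a}, $R_1R_{1^*}$ by \eqref{1.1b}, and the products involving $R_2,R_{2^*}$ by the hypothesis $\{R_2,R_{2^*}\}R_1\subseteq\{R_1,R_{1^*},R_2,R_{2^*}\}$ together with its $*$-image. Next I would show $\mathcal{C}$ is a closed subset. It is $*$-closed: if $B_FA_a$ is supported on $F$, transposing and using $B_F^{T}=B_F$ (as $F=F^*$) and commutativity shows $B_FA_{a^*}$ is supported on $F$. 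It is product-closed: if $R_a,R_b\in\mathcal{C}$, then $B_FA_aA_b=(B_FA_a)A_b$ is a non-negative combination of the matrices $A_jA_b$ with $R_j\in F$, each supported on $F$ since $R_b\in\mathcal{C}$; expanding $A_aA_b=\sum_l p_{a,b}^{l}A_l$ and invoking non-negativity once more yields $R_l\in\mathcal{C}$ for every $R_l\in R_aR_b$. Combining the two gives $R_{a^*}R_b\subseteq\mathcal{C}$ for all $R_a,R_b\in\mathcal{C}$, so $\mathcal{C}$ is closed.

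Finally, since $R_1\in\mathcal{C}$ and $R_1$ generates $\mathfrak{X}$, the smallest closed subset containing $R_1$ is all of $\{R_i\}_{i=0}^{d}$, forcing $\mathcal{C}=\{R_i\}_{i=0}^{d}$. Then any relation $R_l$ satisfies $R_l\in\mathcal{C}$, whence $R_0R_l=\{R_l\}\subseteq F$ and so $R_l\in F$. Thus every relation lies in $F$ and $d=4$. I expect the real crux to be the passage from ``stable under multiplication by the generator $R_1$'' to ``stable under multiplication by all relations''; the device that makes it work --- and that lets me avoid computing $R_2^2$, $R_2R_{2^*}$, $R_{2^*}^2$ or splitting into cases on $I$ and $J$ --- is the repeated use of non-negativity of the intersection numbers to turn a vanishing sum into the vanishing of each summand.
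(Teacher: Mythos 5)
Your proof is correct. It shares its engine with the paper's own argument --- namely the computation that $F R_1\subseteq F$ for $F=\{R_{0},R_{1},R_{1^{*}},R_{2},R_{2^{*}}\}$, which is exactly what \eqref{1.1a}, \eqref{1.1b} and the hypothesis deliver --- but the two proofs finish differently. The paper simply iterates this inclusion to obtain $R_{1}^{i}\subseteq F$ for all $i$ and then invokes ``$R_{1}$ generates $\mathfrak{X}$''; this tacitly uses the standard fact that the closed subset generated by $R_{1}$ coincides with the set of relations occurring in some power $R_{1}^{i}$ (which in turn rests on $R_{0}$, and hence $R_{1^{*}}$, appearing in some power of $R_{1}$). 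Your device of the right stabilizer $\mathcal{C}=\{R_{l}\mid FR_{l}\subseteq F\}$, proved to be a closed subset by the no-cancellation argument for non-negative intersection numbers, replaces that implicit fact with an explicit, self-contained one; the price is the extra verification that $R_{1^{*}}\in\mathcal{C}$ (via the $*$-image of the hypothesis and commutativity), which the paper never has to write down. Both routes are sound; yours is longer but more robust, since it never needs to identify the generated closed subset with $\bigcup_{i}R_{1}^{i}$, and it makes the final deduction ``every relation lies in $F$, hence $d=4$'' completely explicit.
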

\begin{proof}
By \eqref{1.1a} and \eqref{1.1b},
\begin{align}
R_{1}^{3}\subseteq\{R_{1},R_{1^{*}},R_{2}\}R_{1}\subseteq\{R_{0},R_{1},R_{1^{*}},R_{2},R_{2^{*}}\}.\nonumber
\end{align}
Then
\begin{align}
R_{1}^{4}\subseteq\{R_{0},R_{1},R_{1^{*}},R_{2},R_{2^{*}}\}R_{1}\subseteq\{R_{0},R_{1},R_{1^{*}},R_{2},R_{2^{*}}\}.\nonumber
\end{align}
It follows from induction that
\begin{align}
R_{1}^{i}&\subseteq\{R_{0},R_{1},R_{1^{*}},R_{2},R_{2^{*}}\}R_{1}~\textrm{for}~i\geq5.\nonumber
\end{align}
Since $R_{1}$ generates $\mathfrak{X}$, we obtain $d=4$.
\end{proof}

\begin{lemma}\label{2.2}
For $x,z\in X$, we have $P_{1,1}(x,z)\times P_{1,1^{*}}(x,z)\subseteq\bigcup_{j\in I\cap J}R_{j}$.
\end{lemma}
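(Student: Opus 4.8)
The plan is to argue pointwise. I would fix arbitrary $w_1\in P_{1,1}(x,z)$ and $w_2\in P_{1,1^{*}}(x,z)$ (if either set is empty the asserted inclusion is vacuous), let $R_l$ be the unique relation with $(w_1,w_2)\in R_l$, and then show that this single index $l$ lies simultaneously in $I$ and in $J$; since $w_1,w_2$ were arbitrary, this yields $P_{1,1}(x,z)\times P_{1,1^{*}}(x,z)\subseteq\bigcup_{j\in I\cap J}R_j$.

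First I would unwind the two intersection-type sets using $P_{i,j}(x,y)=R_i(x)\cap R_{j^{*}}(y)$. From $P_{1,1}(x,z)=R_1(x)\cap R_{1^{*}}(z)$ one reads off $(x,w_1)\in R_1$ and $(w_1,z)\in R_1$; from $P_{1,1^{*}}(x,z)=R_1(x)\cap R_1(z)$ one gets $(x,w_2)\in R_1$ and $(z,w_2)\in R_1$. The only delicate point in the whole argument is keeping the transpose conventions in the definition of $P_{i,j}$ straight, so I would record these four membership facts explicitly before proceeding.

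Next I would exhibit two witness vertices. Regarding $z$: from $(w_1,z)\in R_1$ and $(z,w_2)\in R_1$ we get $z\in R_1(w_1)$ and $z\in R_{1^{*}}(w_2)$, so $z\in P_{1,1}(w_1,w_2)$ and hence $p_{1,1}^{l}\neq 0$, giving $R_l\in R_1^{2}$, that is $l\in I$. Symmetrically for $x$: from $(x,w_1)\in R_1$ and $(x,w_2)\in R_1$ we get $x\in R_{1^{*}}(w_1)\cap R_{1^{*}}(w_2)=P_{1^{*},1}(w_1,w_2)$, so $p_{1^{*},1}^{l}\neq 0$; since commutativity gives $p_{1^{*},1}^{l}=p_{1,1^{*}}^{l}$, we conclude $R_l\in R_1R_{1^{*}}$, that is $l\in J$.

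Combining the two conclusions gives $l\in I\cap J$, so $(w_1,w_2)\in\bigcup_{j\in I\cap J}R_j$, which is exactly the claim. I do not anticipate a genuine obstacle: the argument amounts to observing that $w_1$ and $w_2$ are joined by a length-two $R_1$-path through $z$ (forcing membership in $I$) and by an $R_1^{*}R_1$-path through $x$ (forcing membership in $J$), and the only step requiring care is the $*$-bookkeeping in translating the hypotheses $w_1\in P_{1,1}(x,z)$, $w_2\in P_{1,1^{*}}(x,z)$ into the four arc memberships above.
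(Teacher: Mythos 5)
Your proof is correct and is essentially identical to the paper's argument: the paper likewise observes that $z\in P_{1,1}(w_1,w_2)$ forces the index into $I$ and $x\in P_{1^*,1}(w_1,w_2)$ forces it into $J$ (using commutativity). Your version just spells out the transpose bookkeeping more explicitly.
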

\begin{proof}Pick $y\in P_{1,1}(x,z)$ and $y'\in P_{1,1^{*}}(x,z)$. Since $z\in P_{1,1}(y,y')$ and $x\in P_{1^{*},1}(y,y')$, we have $(y,y')\in R_{j}$ for some $j\in I\cap J$.
\end{proof}

In the following, we divide the proof of the first statement of Theorem~\ref{main1} into three subsections according to separate assumptions based on the cardinality of the set $I$.

\subsection{The case $|I|=1$}

By (\ref{1.1a}), we have $I=\{i\}$ for some $i\in\{1,1^{*},2\}$. In view of Lemma~\ref{jiben} (iii), one gets $p_{1,1}^{i}=k$. It follows from Lemma~\ref{jiben} (i) that $p_{1,1^{*}}^{i}=0$. This implies $i\notin J$ and also $i\neq 1$ by (\ref{2.1a}). Thus $1\notin I$, and hence $\{1,1^{*},i,i^{*}\}\cap J=\emptyset$. In view of (\ref{set J}), one has $i=1^{*}$ and $J=\{0,2,2^{*}\}$. By Lemma~\ref{jiben} (iv), we have $p_{1,1^{*}}^{2}=(k-1)/2$. Lemma~\ref{jiben} (v) implies $(p_{1,1}^{1^{*}})^{2}=k+2(p_{1,1^{*}}^{2})^{2}.$ Substituting $p_{1,1}^{1^{*}}$ and $p_{1,1^{*}}^{2}$ into the above equation, we get $k=1$, a contradiction.

\subsection{The case $|I|=2$}

We divide our proof into two cases according to whether the set $I$ contains $1$.

\textbf{Case 1.} $I=\{1,1^{*}\}$ or $I=\{1,2\}$.

By Lemma~\ref{3.2} and \eqref{set J}, one gets $J=\{0,1,1^{*}\}$ or $J=\{0,1,1^{*},2,2^{*}\}$.
It follows from Lemma~\ref{jiben} (iv) and \eqref{2.1a} that $2p_{1,1}^{1}+2p_{1,1^{*}}^{2}=k-1$. In view of Lemma~\ref{jiben} (iii), we have $p_{1,1}^{1}+p_{1,1}^{i}=k$ for some $i\in\{1^{*},2\}$. By Lemma~\ref{jiben} (v), we obtain
\[(p_{1,1}^{1})^{2}+(p_{1,1}^{i})^{2}=k+2(p_{1,1^{*}}^{1})^{2}+2(p_{1,1^{*}}^{2})^{2}.\]
In view of (\ref{2.1a}),  we get
\[(k-p_{1,1}^{1})^{2}=k+(p_{1,1}^{1})^{2}+(k-1-2p_{1,1}^{1})^{2}/2.\]
Then $p_{1,1}^{1}=(k-1)/2$ and $p_{1,1^{*}}^{2}=(k-1-2p_{1,1}^{1})/2=0$. Thus, $J=\{0,1,1^{*}\}$.

We claim $I=\{1,1^{*}\}$. Since $p_{1,1}^{1}\neq0$, there exist elements $x,y,z$ such that $(x,y),(y,z),(x,z)\in R_{1}$. By $z\in P_{1,1^{*}}(x,y)$ and Lemma~\ref{2.2}, we have $P_{1,1^{*}}(x,z)\subsetneqq \bigcup_{j\in I\cap J}P_{1,j^{*}}(x,y)$, which implies $I\cap J=\{1,1^{*}\}$. Thus, our claim is valid.

Note that $R_{1}^{i}=\{R_{0},R_{1},R_{1^{*}}\}$ for $i\geq3$. Since $R_{1}$ generates $\mathfrak{X}$, we have $2=1^{*}$, a contradiction.

\textbf{Case 2.} $I=\{1^{*},2\}$.

In view of Lemma~\ref{3.2} and \eqref{set J}, one has $J=\{0,2,2^{*}\}$. By Lemma~\ref{jiben} (iv), we obtain $p_{1,1^{*}}^{2}=(k-1)/2$.

We claim $p_{1,1}^{2}=1$. Pick $(x,z)\in R_{2}$. Since $I\cap J=\{2\}$, Lemma~\ref{2.2} implies
\[P_{1,1^{*}}(x,z)\subseteq\bigcap_{y\in P_{1,1}(x,z)}P_{1,2^{*}}(x,y).\]
Since $|P_{1,1^{*}}(x,z)|=|P_{1,2^{*}}(x,y)|$ for all $y\in P_{1,1}(x,z)$ by \eqref{2.1b}, we get $P_{1,1^{*}}(x,z)=P_{1,2^{*}}(x,y)$ for all $y\in P_{1,1}(x,z)$. Now suppose $y,y'\in P_{1,1}(x,z)$ are distinct. Since $z\in P_{1,1^{*}}(y,y')$ and $J=\{0,2,2^{*}\}$, we have $(y,y')\in R_{2}\cup R_{2^{*}}$. Then we may assume without loss of generality $(y,y')\in R_{2^{*}}$. However, this is a contradiction since $y\in P_{1,2^{*}}(x,y')=P_{1,2^{*}}(x,y)$.

In view of Lemma~\ref{jiben} (iii), we get $p_{1,1}^{1^{*}}=k-1$. By Lemma~\ref{jiben} (v), one has
\[(p_{1,1}^{1^{*}})^{2}+(p_{1,1}^{2})^{2}=k+2(p_{1,1^{*}}^{2})^{2}.\]
Substituting $p_{1,1}^{1^{*}},p_{1,1}^{2}$ and $p_{1,1^{*}}^{2}$ into the above equation, one gets $k=3$.

We claim
\begin{align}
R_{2}R_{1}&=\{R_{1},R_{2},R_{2^{*}}\}\label{subsec2-1},\\
R_{2^{*}}R_{1}&=\{R_{1},R_{1^{*}},R_{2^{*}}\}\label{subsec2-2}.
\end{align}

Indeed, fix $(x,z)\in R_{2}$, and pick $y\in P_{1,1}(x,z)$. Let $w,w',w''$ be three elements such that $R_{1}(z)=\{w,w',w''\}$.  Since $p_{1,1}^{1^{*}}=2$, we may assume that $w',w''\in P_{1,1}(z,y)$.  In view of (\ref{2.1b}), we get $p_{2,1}^{1}=p_{2^{*},1}^{1}=1$. Then we may assume $w'\in P_{2,1}(x,y)$ and $w''\in P_{2^{*},1}(x,y)$. By $p_{1,1^{*}}^{2}=1$, one has $w\in P_{1,1^{*}}(x,z)$. It follows from Lemma~\ref{R2} (i) that \eqref{subsec2-1} is valid.

Since $z\in P_{2,1}(x,w')$, we have $p_{2,1}^{2}\neq0$. Then there exists an element $y'\in P_{1,2}(x,z)$. By $k=3$, one gets  $R_{1}(x)=\{y,y',w\}$. In view of Lemma~\ref{R2} (ii), \eqref{subsec2-2} is valid.

By Lemma~\ref{d=4}, we have $d=4$.

\begin{remark}
Since $k=3$ and $d=4$, we have $|X|=13$. It follows from \cite[Result 1]{MH96} that $\mathfrak{X}$ is isomorphic to $\textrm{Cyc}(13,4)$.
\end{remark}

\subsection{The case $|I|=3$}

By Lemma~\ref{3.2}, one has $\{0,1,1^{*}\}\subseteq J$. In view of Lemma~\ref{jiben} (v) and \eqref{2.1a}, we obtain
\begin{align}
(p_{1,1}^{1^{*}})^{2}+(p_{1,1}^{2})^{2}=k+(p_{1,1}^{1})^{2}+2(p_{1,1^{*}}^{2})^{2}.\label{J}
\end{align}

Suppose $J=\{0,1,1^{*}\}$. In view of Lemma~\ref{jiben} (iv) and \eqref{2.1a}, we get $p_{1,1}^{1}=(k-1)/2$. Then Lemma~\ref{jiben} (iii) implies $p_{1,1}^{1^{*}}+p_{1,1}^{2}=(k+1)/2$. Since $p_{1,1^{*}}^{2}=0$, from \eqref{J}, one gets $(p_{1,1}^{1^{*}})^{2}+(p_{1,1}^{2})^{2}=(k+1)^{2}/4$. It follows that $p_{1,1}^{1^{*}}p_{1,1}^{2}=0$, a contradiction. Hence, $J=\{0,1,1^{*},2,2^{*}\}$.

Suppose $p_{1,1}^{2}=p_{1,1^{*}}^{2}$.  In view of Lemma~\ref{jiben} (iv) and (\ref{2.1a}), one gets $p_{1,1}^{1}+p_{1,1}^{2}=(k-1)/2$. By Lemma~\ref{jiben} (iii), we have  $p_{1,1}^{1^{*}}=(k+1)/2$. In view of \eqref{J}, we get $(p_{1,1}^{1})^{2}+(p_{1,1}^{2})^{2}=(k-1)^{2}/4$. It follows that $p_{1,1}^{1}p_{1,1}^{2}=0$, a contradiction. Hence, $p_{1,1}^{2}\neq p_{1,1^{*}}^{2}$.

We claim
\begin{align}
R_{2}R_{1}&\subseteq\{R_{1},R_{1^{*}},R_{2},R_{2^{*}}\},\label{subsec3-1}\\
R_{2^{*}}R_{1}&\subseteq\{R_{1},R_{1^{*}},R_{2},R_{2^{*}}\}.\label{subsec3-2}
\end{align}

Indeed, fix $(x,z)\in R_{2}$. Pick an element $w\in R_{1}(z)$. Then $P_{1,1}(x,z)\subseteq(R_{1}\cup R_{1^{*}}\cup R_{2^{*}})(w)$ by \eqref{1.1a}. Suppose first $P_{1,1}(x,z)\cap(R_{1}\cup R_{1^{*}})(w)\neq\emptyset$. Then $(x,w)\in R_{1}\cup R_{1^{*}}\cup R_{2}\cup R_{2^{*}}$ by \eqref{1.1a} and \eqref{1.1b}. Next suppose $P_{1,1}(x,z)\subseteq R_{2^{*}}(w)$. By (\ref{2.1d}), we have $P_{1,1}(x,z)=P_{1^{*},2}(z,w)$. Since $p_{1^{*},1}^{2}=p_{1,1^{*}}^{2}\neq0$ from (\ref{2.1b}), there exists an element $y_{0}\in P_{1^{*},1}(x,z)$. In view of $y_{0}\notin P_{1^{*},2}(z,w)$, one gets $(y_{0},w)\in R_{1}\cup R_{1^{*}}$, which implies that $(x,w)\in R_{1}\cup R_{1^{*}}\cup R_{2}\cup R_{2^{*}}$. Therefore, \eqref{subsec3-1} follows from Lemma~\ref{R2} (i). Note that \eqref{subsec3-1} implies
\begin{align}
R_{1^{*}}R_{2^{*}}\subseteq\{R_{1},R_{1^{*}},R_{2},R_{2^{*}}\}.\label{subsec3-3}
\end{align}

Next, pick an element $w\in R_{1}(x)$. Then $P_{1,1}(x,z)\subseteq(R_{0}\cup R_{1}\cup R_{1^{*}}\cup R_{2}\cup R_{2^{*}})(w)$ by \eqref{1.1b}. Suppose first $P_{1,1}(x,z)\cap(R_{0}\cup R_{1}\cup R_{1^{*}}\cup R_{2})(w)\neq\emptyset$. By \eqref{1.1a},\eqref{1.1b} and \eqref{subsec3-3}, we have $(z,w)\in R_{1}\cup R_{1^{*}}\cup R_{2}\cup R_{2^{*}}$. Next, suppose $P_{1,1}(x,z)\subseteq P_{1,2}(x,w)$. Since $p_{1,1}^{2}\neq p_{1,1^{*}}^{2}$, from (\ref{2.1b}) and (\ref{2.1d}), one gets $p_{1^{*},2}^{1}<p_{1^{*},1}^{2}$. Pick an element $y\in P_{1,1}(x,z)$. Since $(y,w)\in R_{2}$,  there exists an element $x'\in P_{1^{*},1}(y,w)$ such that $x'\notin P_{1^{*},2}(y,z)$. Then $(x',z)\in R_{1}\cup R_{1^{*}}$  by \eqref{1.1a}. It follows that $(z,w)\in R_{1}\cup R_{1^{*}}\cup R_{2}\cup R_{2^{*}}$ by \eqref{1.1b}. By Lemma~\ref{R2} (ii), \eqref{subsec3-2} is valid.

By Lemma~\ref{d=4}, we have $d=4$.

\begin{remark}
We do not know any example occurring in the case $|I|=3$.  If there is an association scheme under this case, $\mathfrak{X}$ will not be pseudocyclic by Lemma~\ref{skew}. Indeed $\mathfrak{X}={\rm Cyc}(13,4)$ satisfies $|I|=2$.
\end{remark}

\section{The proof of Theorem~\ref{Main2}}

In order to prove the necessity part
of Theorem~\ref{Main2}, we first show that the girth of $\Gamma$ is $3$ under the condition that $\Gamma$ is not isomorphic to one of the digraphs in (i) and (ii). We then verify the hypotheses of Theorem~\ref{main1}. Since there are weakly distance-regular digraphs of girth $3$ and arbitrarily large diameter (see \cite[Theorem 1.1 (vii) and (viii)]{YYF}), it is nontrivial to verify the hypotheses of Theorem~\ref{main1}, especially \eqref{1.1a}.

In the proof of Theorem~\ref{Main2}, we also need the following two auxiliary results.

\begin{lemma}[{\cite[Theorem 2.10.5]{AEB98}}]\label{prime}
Let $(X,\{R_{i}\}_{i=0}^{d})$ be a primitive translation scheme with $d\geq2$. If $X$ has some cyclic Sylow subgroup (and in particular if $X$ is cyclic), then
$|X|$ is prime and $(X,\{R_{i}\}_{i=0}^{d})$ is cyclotomic.
\end{lemma}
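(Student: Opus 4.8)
The plan is to establish the two conclusions—that $|X|$ is prime and that $\mathfrak{X}$ is cyclotomic—in that order, working throughout with the standard dictionary between a translation scheme on an abelian group and a Schur ring (S-ring), together with character duality. Write $n=\exp(X)$ and identify the Bose--Mesner algebra with the subalgebra $\mathcal{S}\subseteq\mathbb{Q}[X]$ spanned by the class sums $\sum_{x\in D_{i}}x$, where $X\setminus\{0\}=\bigsqcup_{i\geq1}D_{i}$ are the connection sets, so that $(x,y)\in R_{i}\Leftrightarrow x-y\in D_{i}$. The first point I would record is the criterion that makes primitivity usable: the union of the relations in the closed subset generated by $R_{i}$ equals $\{(x,y):x-y\in\langle D_{i}\rangle\}$, where $\langle D_{i}\rangle$ is the subgroup generated by $D_{i}$. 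Hence $\mathfrak{X}$ is primitive if and only if $\langle D_{i}\rangle=X$ for every $i\geq1$; equivalently, no proper nontrivial subgroup of $X$ is a union of connection sets.

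Next I would set up the multiplier action. The primitive idempotents $E_{j}$ correspond to a dual partition $\hat{X}=\bigsqcup_{j}\hat{D}_{j}$, and the Galois group $G=\mathrm{Gal}(\mathbb{Q}(\zeta_{n})/\mathbb{Q})\cong(\mathbb{Z}/n)^{\times}$ acts on $\mathcal{S}\otimes\mathbb{C}$ fixing $\mathcal{S}$, hence permutes the $E_{j}$; a short computation shows that $\sigma_{t}$ carries $\hat{D}_{j}$ to $\hat{D}_{j}^{\,t}=\{\chi^{t}:\chi\in\hat{D}_{j}\}$, so the power maps $\chi\mapsto\chi^{t}$ permute the dual classes. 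Dualizing—the dual of a translation scheme is again a translation scheme, and primitivity is self-dual under the annihilator correspondence $H\leftrightarrow H^{\perp}$—the multiplier group $M=(\mathbb{Z}/n)^{\times}$ acts on $X$ by $x\mapsto tx$ and permutes the connection sets $D_{i}$.

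I would then exploit the cyclic Sylow subgroup. Let $p$ be a prime whose Sylow subgroup of $X$ is cyclic. Then $X$ has a \emph{unique} subgroup $H$ of order $p$, so $H$ is characteristic and in particular $M$-invariant, and $H\setminus\{0\}$—being the generators of a group of prime order, on which $M$ acts through the surjection $(\mathbb{Z}/n)^{\times}\to(\mathbb{Z}/p)^{\times}$—is a single $M$-orbit. The corresponding characteristic index-$p$ and Hall subgroups are likewise $M$-invariant, with nonzero parts decomposing into $M$-orbits in a controlled way. The aim is to extract from these a proper nontrivial subgroup that is a union of connection sets, which would contradict primitivity unless $|X|=p$.

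This final step is the main obstacle, and it is precisely where the combinatorics of the multiplier action alone is insufficient: primitivity forces the connection set containing an order-$p$ element to leave $H$ (otherwise that set would generate a proper subgroup), so one cannot simply read off an invariant subgroup from $M$-orbits, and the multiplicative content of the intersection numbers must enter. To close the gap I would invoke the theory of S-rings over cyclic groups—Schur's theorem that every S-ring over $\mathbb{Z}/p$ is cyclotomic, together with Wielandt's analysis (via rational idempotents and transfer of the multiplier action) showing that a primitive S-ring over a group with a cyclic Sylow subgroup has prime order. Granting this, $|X|=p$ follows; and once $|X|=p$, the same theorem of Schur identifies $\mathfrak{X}$ with a cyclotomic scheme $\mathrm{Cyc}(p,d)$, which completes the proof.
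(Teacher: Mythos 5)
The paper offers no proof of this lemma---it is quoted verbatim from Brouwer--Cohen--Neumaier (Theorem 2.10.5), whose own argument runs through exactly the Schur-ring reduction you describe. Your outline is a faithful sketch of that standard proof, and you are candid that the essential content (primality of $|X|$ via Wielandt's theorem on primitive S-rings over groups with a cyclic Sylow subgroup, and cyclotomy via Schur's theorem on S-rings over $\mathbb{Z}/p$) is being imported rather than reproved, which is no worse than the paper's own bare citation.
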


\begin{lemma}\label{lem:2}
Let $\Gamma$ be a commutative weakly distance-regular digraph, and $(x_{0},x_{1},\ldots,x_{q-1})$ be a circuit consisting of arcs of type $(1,q-1)$,
where $q\geq3$. Suppose $k_{(1,q-1)}=k_{(2,q-2)}$. Let $Y_{i}=P_{(1,q-1),(1,q-1)}(x_{i-1},x_{i+1})$. Then
\[Y_{i}=P_{(2,q-2),(q-1,1)}(x_{i-2},x_{i-1})=P_{(q-1,1),(2,q-2)}(x_{i+1},x_{i+2}),\]
where the indices are read modulo $q$. Moreover, if $q>3$ and $\Gamma$ is a Cayley digraph over an additive group, then $Y_{i}-x_{i-1}=Y_{i+1}-x_{i}$.
\end{lemma}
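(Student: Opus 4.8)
The plan is to first pin down all two-way distances along the circuit, then to prove the two set-identities by matching cardinalities against a single inclusion, and finally to pass to the translation-scheme setting for the last assertion. Throughout write $r=(1,q-1)$ and $s=(2,q-2)$; since $\wz{\partial}(x,y)=(a,b)$ is equivalent to $\wz{\partial}(y,x)=(b,a)$, we have $\Gamma_{(a,b)}^{T}=\Gamma_{(b,a)}$, so $r^{*}=(q-1,1)$, $s^{*}=(q-2,2)$, and $Y_{i}=\Gamma_{r}(x_{i-1})\cap\Gamma_{r^{*}}(x_{i+1})$. First I would record the distances. As each arc $(x_{j},x_{j+1})$ has type $r$, the value $\partial(x_{j+1},x_{j})=q-1$ is realized by the directed path $x_{j+1}\to x_{j+2}\to\cdots\to x_{j}$ of length $q-1$ running once around the circuit; since a subpath of a geodesic is a geodesic, this gives $\partial(x_{a},x_{a+t})=t$ for all $a$ and $0\le t\le q-1$, hence $\wz{\partial}(x_{a},x_{b})=(t,q-t)$ with $t\equiv b-a\pmod q$. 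In particular $(x_{i-1},x_{i+1}),(x_{i},x_{i+2})\in\Gamma_{s}$, $(x_{i-2},x_{i-1}),(x_{i+1},x_{i+2})\in\Gamma_{r}$, and $x_{i}\in Y_{i}$.

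For the two displayed identities I would argue by cardinality plus one inclusion. From $(x_{i-1},x_{i+1})\in\Gamma_{s}$ we get $|Y_{i}|=p_{r,r}^{s}$, while $(x_{i-2},x_{i-1})\in\Gamma_{r}$ gives $|P_{s,r^{*}}(x_{i-2},x_{i-1})|=p_{s,r^{*}}^{r}$ and $(x_{i+1},x_{i+2})\in\Gamma_{r}$ gives $|P_{r^{*},s}(x_{i+1},x_{i+2})|=p_{r^{*},s}^{r}$. Applying Lemma~\ref{jb}(ii) with $(i,j,l)=(r,r,s)$ yields $p_{r,r}^{s}k_{s}=p_{s,r^{*}}^{r}k_{r}=p_{r^{*},s}^{r}k_{r}$, so the hypothesis $k_{r}=k_{s}$ forces the three cardinalities to coincide. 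It then suffices, for $q\ge4$, to prove the inclusions $Y_{i}\subseteq\Gamma_{s}(x_{i-2})\cap\Gamma_{r}(x_{i-1})$ and $Y_{i}\subseteq\Gamma_{r^{*}}(x_{i+1})\cap\Gamma_{s^{*}}(x_{i+2})$; the factors $\Gamma_{r}(x_{i-1})$ and $\Gamma_{r^{*}}(x_{i+1})$ are already built into $Y_{i}$, so for $y\in Y_{i}$ I only need $\wz{\partial}(x_{i-2},y)=s$ and $\wz{\partial}(x_{i+2},y)=s^{*}$. Each is a short triangle-inequality check: for instance $\partial(x_{i-2},y)\le\partial(x_{i-2},x_{i-1})+\partial(x_{i-1},y)=2$, while $\partial(y,x_{i-2})=q-2$ from $\partial(y,x_{i-1})\le\partial(y,x_{i-2})+1$ and $\partial(y,x_{i-2})\le\partial(y,x_{i+1})+\partial(x_{i+1},x_{i-2})=1+(q-3)$; and $\partial(x_{i-2},y)=1$ is excluded because an arc $x_{i-2}\to y$ would give $\partial(x_{i+1},y)\le\partial(x_{i+1},x_{i-2})+1=(q-3)+1<q-1$, contradicting $y\in\Gamma_{r^{*}}(x_{i+1})$. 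The computation at $x_{i+2}$ is symmetric. The case $q=3$ is degenerate: there $s=r^{*}$, $s^{*}=r$, $x_{i-2}=x_{i+1}$ and $x_{i+2}=x_{i-1}$, so both right-hand sides collapse to $Y_{i}$ at once.

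For the final assertion, when $\Gamma=\mathrm{Cay}(H,S)$ with $H$ abelian the attached scheme is a translation scheme, so every relation is translation invariant and $P_{r,r}(u,v)=u+P_{r,r}(0,v-u)$. Hence $Y_{i}-x_{i-1}=P_{r,r}(0,x_{i+1}-x_{i-1})$ and $Y_{i+1}-x_{i}=P_{r,r}(0,x_{i+2}-x_{i})$, and since both have size $p_{r,r}^{s}$ it is enough to prove a single inclusion, say $Y_{i+1}\subseteq Y_{i}+(x_{i}-x_{i-1})$. Given $w\in Y_{i+1}$, put $a=x_{i}-x_{i-1}$ and $w'=w-a$; translation invariance turns the first defining condition of $Y_{i+1}$ into $w'\in\Gamma_{r}(x_{i-1})$ for free, and by the first part of the lemma (the identity $\Gamma_{r}(x_{i-1})\cap\Gamma_{r^{*}}(x_{i+1})=\Gamma_{r}(x_{i-1})\cap\Gamma_{s}(x_{i-2})$) it remains only to verify $\wz{\partial}(x_{i-2},w')=s$.

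I expect this last verification to be the main obstacle. The four two-way distances from $w$ to $x_{i-1},x_{i},x_{i+2},x_{i+3}$ furnished by the three descriptions of $Y_{i+1}$ translate into distances from $w'$ to $x_{i-1}-a,\,x_{i-1},\,x_{i+2}-a,\,x_{i+3}-a$, and the difficulty is precisely that these points need not coincide with $x_{i-2}$ unless consecutive steps of the circuit agree, so no formal identity closes the gap. The hypothesis $q>3$ must enter here essentially — for $q=3$ one checks directly on $\mathrm{Cyc}(13,4)$ that $P_{r,r}(0,\cdot)$ genuinely varies over $\Gamma_{s}(0)$, so the statement would be false — and I anticipate completing the argument by combining the triangle inequality with the pairwise distinctness of $r,r^{*},s,s^{*}$ (valid only when $q>3$) and, if necessary, with finer regularity of type-$r$ circuits; this is the step I would expect to cost the most effort.
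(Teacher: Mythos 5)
Your treatment of the first displayed identity is correct and is essentially the paper's argument: one inclusion plus a cardinality count using Lemma~\ref{jb}(ii) and the hypothesis $k_{(1,q-1)}=k_{(2,q-2)}$. (The paper obtains the inclusion by noting that substituting $y$ for the middle vertex yields another circuit of arcs of type $(1,q-1)$, whereas you verify the required two-way distances directly by triangle inequalities; your version is, if anything, more explicit.) The genuine gap is in the ``Moreover'' part. You reduce it correctly to showing $\wz{\partial}(x_{i-2},w-a)=(2,q-2)$ for $w\in Y_{i+1}$ and $a=x_{i}-x_{i-1}$, you correctly diagnose that no formal translation identity or plain triangle inequality closes this, and then you stop, saying you ``anticipate completing the argument'' using some unspecified ``finer regularity of type-$(1,q-1)$ circuits.'' That anticipated ingredient is precisely the missing step, so the second assertion is not proved.

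The idea you are missing is to feed the first statement back into a modified circuit. Since $w\in Y_{i+1}=P_{(1,q-1),(1,q-1)}(x_{i},x_{i+2})$, replacing $x_{i+1}$ by $w$ again yields a circuit of arcs of type $(1,q-1)$, and because $q>3$ the vertices $x_{i-2}$ and $x_{i-1}$ are untouched by this substitution. Applying the first statement to the new circuit gives $P_{(1,q-1),(1,q-1)}(x_{i-1},w)=P_{(2,q-2),(q-1,1)}(x_{i-2},x_{i-1})$, and the right-hand side equals $Y_{i}$ by the first statement applied to the original circuit; in other words, $P_{(1,q-1),(1,q-1)}(x_{i-1},w)$ does not depend on the choice of $w\in Y_{i+1}$. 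The translation structure then finishes the proof: writing $S'=\Gamma_{(1,q-1)}(0)$, one has $w-x_{i}\in S'$ and $x_{i}-x_{i-1}\in S'$, hence
\[
x_{i-1}+w-x_{i}\in(x_{i-1}+S')\cap(w-S')=P_{(1,q-1),(1,q-1)}(x_{i-1},w)=Y_{i},
\]
so $w-x_{i}\in Y_{i}-x_{i-1}$, and equality of the two translates follows from $|Y_{i}|=|Y_{i+1}|$ as you already observed. The hypothesis $q>3$ enters exactly where you predicted, but through a reuse of the first statement rather than through new distance estimates.
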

\begin{proof}
It suffice to show both statements for $i=1$ only. 

In order to prove the first statement, by reversing the orientation, it suffices to prove $Y_{1}=P_{(q-1,1),(2,q-2)}(x_{2},x_{3})$. If $y\in Y_{1}$, then
$(x_{0},y,x_{2},\ldots,x_{q-1})$ is also a circuit consisting of arcs of type $(1,q-1)$,
so $y\in P_{(q-1,1),(2,q-2)}(x_{2},x_{3})$. Hence, $Y_{1}\subseteq P_{(q-1,1),(2,q-2)}(x_{2},x_{3})$.
Since $k_{(1,q-1)}=k_{(2,q-2)}$, we get $p_{(1,q-1),(1,q-1)}^{(2,q-2)}=p_{(q-1,1),(2,q-2)}^{(1,q-1)}$ from Lemma~\ref{jb} (ii).
Thus, equality is forced, and the first statement is valid.

Now suppose that $q>3$ and $\Gamma$ is a Cayley digraph over an additive group. From the first statement, we have 
\begin{align}\label{sec4-1}
Y_{1}=P_{(2,q-2),(q-1,1)}(x_{q-1},x_{0}).
\end{align}
Let $y\in Y_{2}$. Since $q>3$, $(x_{1},y,x_{3},\ldots,x_{q-1},x_{0})$ is a circuit which contains two distinct vertices $y$ and $x_{q-1}$. From the first statement again, one gets 
\begin{align}\label{sec4-2}
P_{(1,q-1),(1,q-1)}(x_{0},y)=P_{(2,q-2),(q-1,1)}(x_{q-1},x_{0}). 
\end{align}
Since $y-x_{1},x_{1}-x_{0}\in\Gamma_{1,q-1}(0)$, we obtain
\begin{align}
x_{0}+y-x_{1}&\in(x_{0}+\Gamma_{1,q-1}(0))\cap(y-\Gamma_{1,q-1}(0))\nonumber\\
&=P_{(1,q-1),(1,q-1)}(x_{0},y)\nonumber\\
&=Y_{1}\nonumber
\end{align}
by \eqref{sec4-1} and \eqref{sec4-2}. This proves $y-x_{1}\in Y_{1}-x_{0}$. Since $y\in Y_{2}$ was arbitrary, one has $Y_{2}-x_{1}\subseteq Y_{1}-x_{0}$. The fact that $|Y_{1}|=|Y_{2}|$ implies $Y_{1}-x_{0}=Y_{2}-x_{1}$. The second statement is also valid.
\end{proof}

\begin{proof}[Proof of Theorem~\ref{Main2}]
Observe that all the digraphs in Theorem~\ref{Main2} (i)--(iii) are
primitive weakly distance-regular circulant digraphs.

Let $\Gamma=\textrm{Cay}(\mathbb{Z}_{p},S)$ be a primitive weakly distance-regular digraph of girth $g$, where $\mathbb{Z}_{p}=\{0,1,\ldots,p-1\}$ is the cyclic group of order $p$, written additively. It is an easy observation that
$\mathfrak{X}=(V\Gamma,\{\Gamma_{\wz{i}}\}_{\wz{i}\in\wz{\partial}(\Gamma)})$ is a translation scheme. By Lemma~\ref{prime}, $p$ is a prime and $\mathfrak{X}$
is also cyclotomic and hence skew-symmetric.

If $\mathfrak{X}$ is a $2$-class association scheme, then $\Gamma$ is isomorphic to a Paley digraph. Suppose that $\Gamma$ is not
the circuit and $|\wz{\partial}(\Gamma)|>3$. We only need to prove that $\Gamma$ is isomorphic to the digraph in (iii). We prove it step by step.

\begin{step}\label{g-1}
~{\rm $S=\Gamma_{(1,g-1)}(0)$.}\vspace{-0.3cm}
\end{step}

Suppose $(1,q)\in\wz{\partial}(\Gamma)$ and pick $x_1\in\Gamma_{(1,q)}(0)$.
By Lemma~\ref{lem:1}, there exists  $s\in\mathbb{Z}_{p}$ such that $\Gamma_{(1,g-1)}^{(s)}=\Gamma_{(1,q)}$, which implies that
there exists $y_1\in\Gamma_{(1,g-1)}(0)$ such that $sy_1=x_1$. Then there exists a circuit
$(y_1,y_2,\dots,y_g=0)$ of length $g$ consisting of arcs of type $(1,g-1)$. Multiplication
by $s$ gives a path of length $g-1$ from $x_1$ to $0$, which implies $(0,x_1)\in\Gamma_{(1,g-1)}$.
This implies $q=g-1$, and we have proved
$\{(1,q)\mid(1,q)\in\wz{\partial}(\Gamma)\}=\{(1,g-1)\}$.
Our claim then follows.

\begin{step}\label{girth}
~{\rm $g=3$, or equivalently, $(1,2)\in\wz{\partial}(\Gamma)$.}\vspace{-0.3cm}
\end{step}

Suppose, to the contrary that $g>3$. Let $(x_{0}=0,x_{1},\ldots,x_{g-1})$ be a circuit of length $g$, where the subscripts of $x$
are read modulo $g$. Since
$\mathfrak{X}$
is a cyclotomic scheme, we have $k_{(1,g-1)}=k_{(2,g-2)}$. In the notation of Lemma~\ref{lem:2} by setting $q=g$, one gets $P_{(g-1,1),(2,g-2)}(x_{i+2},x_{i+3})=Y_{i+1}$. By Lemma~\ref{lem:2} applied to the circuit $(y_{i},x_{i+1},x_{i+2},x_{i+3},\ldots,x_{i-1})$ for any $y_{i}\in Y_{i}$, one obtains $Y_{i}-x_{i-1}=P_{(1,g-1),(1,g-1)}(y_{i},x_{i+2})-y_{i}=P_{(g-1,1),(2,g-2)}(x_{i+2},x_{i+3})-y_{i}=Y_{i+1}-y_{i}$. 
Since $x_j\in Y_j$ for $1\leq j<i$ and $x_0=0$, it follows by induction that
\begin{equation}\label{step2}
Y_{1}=Y_{i+1}-y_{i}\quad(y_i\in Y_i).
\end{equation}

We prove $Y_{1}^{(i)}=Y_{i}$ for $1\leq i\leq g$ by induction on $i$, where $Y_{1}^{(i)}:=\{ih\mid h\in Y_{1}\}$. The case $i=1$ is trivial. Suppose $i\geq1$. By induction, 
we have $Y_{1}^{(i)}=Y_{i}$. Let $y\in Y_1$. Then $iy\in Y_{i}$, so
$Y_{1}=Y_{i+1}-iy$ by
\eqref{step2}. This implies $(i+1)y\in Y_{i+1}$, and so $Y_{1}^{(i+1)}\subseteq Y_{i+1}$. 
The fact $|Y_{1}|=|Y_{i+1}|$ implies $Y_{1}^{(i+1)}=Y_{i+1}$.
Since $0\in Y_{g}=Y_{1}^{(g)}$, one has $gy=0$ for some $y\in Y_{1}$. The fact that $y\neq 0$ implies that $p=g$. Then $\Gamma$ is a circuit, contrary to the assumption. Thus, $g=3$.

\begin{step}\label{bkb}
~{\rm $\Gamma_{(1,2)}^{2}\subseteq\{\Gamma_{(1,2)},\Gamma_{(2,1)},\Gamma_{(2,3)}\}$.}\vspace{-0.3cm}
\end{step}

Since $\mathfrak{X}$ is skew-symmetric, from Steps~\ref{g-1} and \ref{girth}, one obtains 
\[\Gamma_{(1,2)}^{2}\subseteq\{\Gamma_{(1,2)},\Gamma_{(2,1)},\Gamma_{(2,3)},\Gamma_{(2,4)}\}.\] Suppose $(2,4)\in\wz{\partial}(\Gamma)$. By Lemma~\ref{lem:1},
there exists $s\in\mathbb{Z}_{p}$ such that $\Gamma_{(1,2)}^{(s)}=\Gamma_{(2,4)}$.
Pick a circuit $(x_{0},x_{1},x_{2})$ of length $3$.
Then $(x_{0},x_{1}),(x_{1},x_{2}),(x_{2},x_{0})\in\Gamma_{(1,2)}$.
Thus $(sx_{0},sx_{1}),(sx_{1},sx_{2}),(sx_{2},sx_{0})\in\Gamma_{(2,4)}$.
Note that there exist vertices $y_{0},y_{1},y_{2}$ such that
$(sx_{0},y_{0},sx_{1},y_{1},sx_{2},y_{2})$ is a circuit.
Since $(sx_{2},sx_{0})\in\Gamma_{(2,4)}$,
$(sx_{0},y_{0},sx_{1},y_{1},sx_{2})$ is a shortest path from $sx_{0}$ to $sx_{2}$. Then $(sx_{0},y_{0},sx_{1},y_{1})$ is a shortest path from $sx_{0}$ to $y_1$.
Similarly, $(y_{1},sx_{2},y_{2},sx_{0})$ is a shortest path from $y_1$ to $sx_{0}$.
This implies $\wz{\partial}(sx_{0},y_{1})=(3,3)$, contrary to that fact that $\mathfrak{X}$ is skew-symmetric. Hence, $(2,4)\notin\wz{\partial}(\Gamma)$ and the desired result follows.

\begin{step}\label{final}
~{\rm $\Gamma$ is isomorphic to the digraph in Theorem~\ref{Main2} (iii).}\vspace{-0.3cm}
\end{step}

Setting $R_{1}=\Gamma_{(1,2)}$ and $R_{2}=\Gamma_{(2,3)}$, the conditions \eqref{1.1b} and \eqref{1.1c} of Theorem~\ref{main1} hold since $\mathfrak{X}$ is skew-symmetric, while \eqref{1.1a} is satisfied by Step~\ref{bkb}. Since $\mathfrak{X}$ is cyclotomic, it is pseudocyclic. Thus, the second statement of Theorem~\ref{main1} implies that (iii) holds. \end{proof}

\section*{Acknowledgements}
A.~Munemasa is supported by JSPS Kakenhi (Grant No.~17K05155),
K.~Wang is supported by NSFC (11671043),
Y.~Yang is supported by the Fundamental Research Funds for the Central Universities (Grant No.~2652017141).
A part of this research was done while A.~Munemasa was visiting Beijing Normal University,
and while Y.~Yang was visiting Tohoku University. We acknowledge the support
from RACMaS, Tohoku University.


\end{document}